\DeclareMathOperator\C{\mathbb C}
\DeclareMathOperator\Z{\mathbb Z}
\DeclareMathOperator\Q{\mathbb Q}
\DeclareMathOperator\Hom{\mathrm Hom}
\DeclareMathOperator\Gr{\mathrm Gr}
\newcommand{\Res}{\operatornamewithlimits{Res}}
\DeclareMathOperator\Tp{\mathrm Tp}
\DeclareMathOperator\KTp{\mathrm KTp}
\DeclareMathOperator\G{\mathfrak G}
\DeclareMathOperator\A{\mathcal A}
\DeclareMathOperator\codim{codim}
\DeclareMathOperator\pt{\mathrm{pt}}
\DeclareMathOperator\Ltd{\Lie(T)^\lor}
\DeclareMathOperator\chik{\widetilde{\chi}}
\newcommand\GL{\mathrm{GL}}
\newcommand\calO{\mathcal{O}}
\newcommand\ep{\alpha}
\newcommand{\Wl}{\mathcal{W}}
\newcommand{\tr}{\mathrm{Tr}}
\newcommand{\Ce}{\C^a}
\newcommand{\Cf}{\C^b}
\newcommand{\Cb}{\C^b}
\newcommand{\glef}{\mathrm{GL}[a\rightarrow b]}
\newcommand{\gla}{\mathrm{GL}_a}
\newcommand{\glb}{\mathrm{GL}_b}
\def\gr{Gro\-then\-dieck }
\newtheorem{fact}{Fact}[section]
\newtheorem{lemma}[fact]{Lemma}
\newtheorem{assumption}[fact]{Assumption}
\newtheorem{theorem}[fact]{Theorem}
\newtheorem{definition}[fact]{Definition}
\newtheorem{example}[fact]{Example}
\newtheorem{rremark}[fact]{Remark}
\newenvironment{remark}{\begin{rremark} \rm}{\end{rremark}}
\newtheorem{proposition}[fact]{Proposition}
\newcommand{\Lie}{\mathrm{Lie}}
\author{Rich\'ard Rim\'anyi}
\address{Department of Mathematics, University of North Carolina at Chapel Hill, USA}
\email{rimanyi@email.unc.edu}
\author{Andr\'as Szenes}
\address{Section de math\'ematiques, Universit\'e de Gen\`eve, Switzerland}
\email{andras.szenes@unige.ch}
\title{Residues, Grothendieck polynomials and K-theoretic Thom polynomials}
\begin{document}

\begin{abstract}
  \gr polynomials were introduced by Lascoux and Sch\"utzenberger, and
  they play an important role in K-theoretic Schubert calculus. In
  this paper, we give a new definition of double stable \gr
  polynomials based on an iterated residue operation.

  We illustrate the power of our definition 
  by calculating the \gr expansion of K-theoretic Thom polynomials of
  $\A_2$ singularities. We present the expansion in two versions: one
  displays its expected stabilization property, while the other
  displays its expected finiteness property. 
\end{abstract}

\maketitle

\section{Introduction}
From the point of view of enumerative geometry, the most important
invariant of a subvariety $X$ in a smooth variety $M$ is its
cohomological fundamental class
$[X\subset M]\in H^{\codim(X\subset M)}(M)$, obtained from the
homology fundamental class by Poincar\'e duality.  A general strategy
to study this invariant is {\em degeneracy loci theory} (see
eg. \cite{fultonpragacz,bsz}), which reduces the problem of
calculating fundamental classes to calculating $G$-equivariant
fundamental classes
\[ [\eta \subset J] \in H_G^{\codim(\eta \subset
  J)}(J)=H_G^{\codim(\eta \subset J)}(\pt)\]
of $G$-invariant subvarieties $\eta$ of a $G$-representation $J$. 

We encounter this setup, for example, in modern Schubert calculus, where $J$ is
a representation vector space of a quiver and the fundamental class is
called a {\em quiver polynomial}, see e.g. \cite{KMS, buch, Rqr}. Another instance is  {\em global singularity theory},
where $J$ is the vector space of germs of maps acted upon by
reparametrizations, and the fundamental class is called the {\em Thom
  polynomial} \cite{Thom, RRTPP} of the
singularity.

In this paper we will be concerned with the notion of the {\em $G$-equivariant
K-theoretic fundamental class} $[\eta\subset J]\in K_G(J)=K_G(\pt)$ of
an invariant subvariety $\eta$ of a $G$-representation $J$.
It turns out that  there is some ambiguity in the definition of such
an object  (cf. Section \ref{sec:fundclass}), but , regardless, the cohomo\-lo\-gical
fundamental class may always be recovered from the
K-theoretic fundamental class via a limiting procedure.

In cohomological fundamental class theory, a natural basis is the
Schur basis, in part because the Schur polynomials are related to the
fundamental classes of Schubert varieties.  It is thus natural to
attempt to express K-theoretic fundamental classes in terms of {\em
  \gr polynomials} introduced in \cite{LS} which are similarly
related to the K-theoretic fundamental classes of (the structure
sheaves of) Schubert varieties.

There is a number of ``flavors'' of Schur and \gr polynomials, and, in
this article, we will focus on the so-called {\em double stable}
polynomials, which are best adapted to the bivariant problems we
study.

Finally, we would like to mention a central aspect of the theory: the
various {\em positivity} results, which state that in a number of
situations the coefficients of equivariant Poincar\'e duals in the
Schur basis are nonnegative. This is true, for example,  for the above mentioned Thom
polynomials of singularities \cite{pw}, and quiver polynomials \cite{KMS, BR}. In the K-theoretic
setup, it seems that the corresponding notion is expressions with {\em
  alternating signs}. An example of this phenomenon is Buch's result in \cite{buch}, which shows that
K-theoretic Dynkin quiver polynomials may be expressed in terms of
certain double stable \gr polynomials; moreover, the coefficients in
this expression are (conjecturally) alternating. \footnote{Using
  results of the present paper, Allman \cite{AllmanK}  showed
  stabilization properties of such expansions.}

In this paper, we present a residue calculus for double stable Grothendieck
polynomials, which makes proving various properties, in particular,
positivity of expansions, straightforward. 
Our formulas allow us to begin the study of K-theoretic Thom polynomials. 
Below, we give a quick introduction to these two subjects and explain
the main results of the paper.

\subsection{\gr polynomials} \label{sec:introgr}

In Section \ref{sec:combgr}, we recall the original definition of
double stable \gr polynomials \cite{FK1, FK2}. This involves first
introducing {\em ordinary \gr polynomials} $\G_w$, indexed by
permutations, and defined by a recursion involving divided
differences. Geometrically, the polynomials $\G_w$ represent
torus-equivariant K-theoretic fundamental classes of Schubert
varieties in full flag varieties. Next, {\em double stable \gr
  polynomials} $G_\lambda(\alpha;\beta)$ parametrized by partitions
are defined by a limiting procedure from ordinary \gr polynomials,
and, finally, applying to these latter polynomials a set of certain
{\em straightening laws}, one defines double stable \gr polynomials
$G_I(\alpha;\beta)$ parametrized by arbitrary integer sequences.
Another approach to double stable \gr polynomials parametrized by
partitions uses the combinatorics of set-valued tableaux
\cite{B:combgr}.

In \S \ref{sec:defg}, we propose a new formula for the most general
integer-sequence parametrized double stable \gr polynomials:
\begin{multline}\label{eqn:introG}
G_I(\alpha_1,\ldots,\alpha_k;\beta_1,\ldots,\beta_l)=\\
\Res_{z_1=0,\infty}\ldots \Res_{z_r=0,\infty}  \left( \prod_{j=1}^r (1-z_j)^{I_j-j} 
\prod_{i>j} \left( 1-\frac{z_i}{z_j} \right)
\prod_{j=1}^r
\frac{ \prod_{i=1}^l (1-z_j\beta_i) }{ \prod_{i=1}^k (1-z_j\alpha_i) (1-z_j)^{l-k}}
\prod_{j=1}^r \frac{dz_j}{z_j}
\right).
\end{multline}
This formula is analogous to the useful residue formula
\begin{multline}\label{eqn:schurdef}
s_I(\bar{\alpha}_1,\ldots,\bar{\alpha}_k;\bar{\beta}_1,\ldots,\bar{\beta}_l)=\\(-1)^r
\Res_{z_1=\infty}\ldots\Res_{z_r=\infty}
\left(
\prod_{j=1}^{r} z_j^{I_j} 
\prod_{j>i } \left(1-\frac{z_i}{z_j}\right)
\prod_{j=1}^{r}
\frac{  \prod_{i=1}^l (1+\bar{\beta}_i/z_j) }{ \prod_{i=1}^k (1+\bar{\alpha}_i/z_j)}
\cdot
\prod_{j=1}^r \frac{dz_j}{z_j}
\right)
  \end{multline}
  for the double stable Schur polynomials (see e.g. \cite[Lemma
  6.1]{csm}). Note that in the case of Schur polynomials, the residues
  are taken only at infinity, while for \gr polynomials, one takes the
  sum of the residues at 0 and infinity.

  In addition to its simplicity and efficient computability, our
  formula (\ref{eqn:introG}) has the perfect form for computing the
  expansions of K-theoretic Thom polynomials.

\subsection{K-theoretic Thom polynomials of
  singularities} \label{sec:introktp}
The general reference for singularities of maps is \cite{AVGL}. 
For a positive integer $N$, denote by  $R^N(\Ce)$ the algebra of
$N$-jets of functions on $\Ce$ at 0; this is the ring of polynomials
in $a$ variables modulo monomials of degree at
least $N+1$.  Let $J^N(\Ce,\Cf)$ be the space of $N$-jets  of maps
$(\Ce,0)\to (\Cf,0)$ vanishing at 0. An element of $J^N(\Ce,\Cf)$ is given by a $b$-tuple of jets from the maximal ideal of $R^N(\Ce)$.
A {\em singularity} $\eta$ is an algebraic subvariety of $J^N(\Ce,\Cf)$
invariant under the group of formal holomorphic 
reparametrizations of $(\Ce,0)$ and $(\Cf,0)$ (cf. e.g. \cite{bsz}
\S3).

  An important set of examples of singularities, called {\em contact
    singularities}, is obtained as follows. A key reparametrization
  invariant of $N$-jets of function is the {\em local algebra},
  defined for
  $h=(h_1(x_1,\ldots,x_a),\ldots,h_b(x_1,\ldots,x_a))\in J^N(\Ce,\Cf)$
  as the ideal quotient $R^N(\Ce)/(h_1,\ldots,h_b)$. Then for a fixed
  finite-dimensional local commutative algebra $Q$ and nonnegative
  integers $a\leq b$, we can define the singularity $\eta_Q^{a\to b}$
  as the Zariski closure of the set
\[
\{g\in J^N(\Ce, \Cf):\text{ the local algebra of $g$ is isomorphic to } Q\}.
\]
(We will omit the dimensions $a$ and $b$ from the notation when this
causes no confusion.)

Denote the group of linear reparametrizations
$\gla(\C)\times \glb(\C)$ by $\glef$, and observe that the space
$J^N(\Ce,\Cf)$ is equivariantly contractible, hence we have the
identification with the symmetric polynomials:
\begin{align*}
 H^*_{\glef}(J^N(\Ce,\Cf))=H^*_{\glef}(\pt)=\Z[\bar{\ep}_1,\ldots,\bar{\ep}_a,\bar{\beta}_1,\ldots,\bar{\beta}_b]^{S_a\times S_b}, \\
 K_{\glef}(J^N(\Ce,\Cf))=K_{\glef}(\pt)=\Z[{\ep}_1^{\pm1},\ldots,{\ep}_a^{\pm1},{\beta}_1^{\pm 1},\ldots,{\beta}_b^{\pm 1}]^{S_a\times S_b},
\end{align*}
where $\bar{\ep}_i$  ($\bar{\beta}_j$),  and $\ep_i$,
($\beta_j$)    are the cohomological and  K-theoretic Chern roots of the
standard representation of $\gla(\C)$ ($\glb(\C)$), and $S_m$ is the
permutation group on $m$ elements .

In \S\ref{sec:fundclass}, we recall the definition of the equivariant
Poincar\'e dual class $[X]$ of an invariant algebraic subvariety
$X\subset V$ in a vector space  acted upon by a Lie group. Using this
notion, we define the  {\em  Thom polynomial} of the singularity $\eta$ as the equivariant
Poincar\'e dual 
\[   \Tp_\eta^{a\to b}=[\eta]\in H^*_{\glef}(J^N(\Ce,\Cf)).
\]
 The analogous K-theoretic notion
\[ \KTp^{a\to b}_\eta=[\eta]^K\in K_{\glef}(J^N(\Ce,\Cf))
\]
 is, in fact, somewhat problematic, and we will
discuss its definition in detail in  Section \ref{sec:fundclass} as well.  

To simplify our notation, we will denote the Thom polynomial of the
contact singularity $\eta_Q$ as $\Tp_Q$ (and $\KTp_Q$) when this
causes no confusion. Consider the example of $Q=\A_2=\C[x]/(x^3)$. We
will write formulas for $\Tp_{\A_2}$ in terms of Schur
functions
$s_\lambda=s_\lambda(\bar{\ep_1},\ldots,\bar{\ep_a},\bar{\beta_1},\ldots,\bar{\beta_b})$
defined in (\ref{eqn:schurdef}), or equivalently, by the more
standard definition $s_\lambda=\det(c_{\lambda(i)+j-i})$ with
\[
1+c_1t+c_2t^2+\ldots=\frac{\prod_{i=1}^b(1+\bar{\beta}_it)}{\prod_{i=1}^a(1+\bar{\ep}_i)}.  
\]
The general formula due to Ronga \cite{ronga} is as follows:
\begin{equation} \label{eqn:A2gen}
\Tp_{\A_2}^{a \to a+l}=\sum_{i=0}^{l+1} 2^{i}s_{l+1+i,l+1-i}.
\end{equation}
Here are the first few cases:
\[
\Tp_{\A_2}^{a\to a}=s_{1,1}+2s_{2,0},\qquad
\Tp_{\A_2}^{a\to a+1}=s_{2,2}+2s_{3,1}+4s_{4,0},\qquad
\Tp_{\A_2}^{a\to a+2}=s_{3,3}+2s_{4,2}+4s_{5,1}+8s_{6,0}.\]

Formula (\ref{eqn:A2gen}) illustrates
three key features of cohomological Thom polynomials of contact
singularities:
\begin{itemize}
\item (stability) The Thom polynomial $\Tp_{\A_2}^{a\to b}$ only depends on
  the {\em relative dimension} $b-a$ (denoted by $l$), not on $a$ and $b$
  individually.
\item ($l$-stability) We obtain $\Tp_{\A_2}^{a\to a+l}$ from
  $\Tp_{\A_2}^{a\to a+l+1}$ by replacing each Schur polynomial $s_{a,b}$ by
  $s_{a-1,b-1}$ (note that $s_{a,-1}=0$). The general
  statement of this property for arbitrary $Q$ may be found in \cite[Theorems 2.1, 4.1]{structure}.
\item (positivity) The coefficients of Schur expansions of Thom
  polynomials of contact singularities are non-negative \cite{pw}.
\end{itemize}

\smallskip

In \S\ref{sec:A2} we calculate the K-theoretic Thom
polynomials $\KTp_{\A_2}^{a\to b}$ for all $a\leq b$, and in 
\S\ref{sec:higher} we comment on the case of 
higher singularities. 

In our calculations, we observe a feature new to K-theory: our K-theoretic Thom polynomials have
two different types of expansions. 

The first type begins with a formal
infinite sum of \gr polynomials indexed by integer sequences; this
infinite sum {\em has the $l$-stability property} analogous to the
$l$-stability of cohomological Thom polynomials, see Remark
\ref{rem:lstab}. Partially summing the series, one obtains a reduced
series, whose all but finitely many terms vanish. 

The second expansion, which we will call {\em minimal} 
(Theorem \ref{thm:expansion2}), expresses KTp as a finite sum of \gr
polynomials indexed by partitions. This expression is uniquely defined
but it  is not $l$-stable.

Let us give a visual explanation of the relation between the two \gr
expansions of $\KTp_{\A_2}$. Consider the rational function
\[f(x_1,x_2)=\left.\frac{1}{1-z_2/z_1^2}\right|_{z_1=1-x_1,z_2=1-x_2}=\frac{1-2x_1+x_1^2}{x_2-2x_1+x_1^2}.\]
The coefficients of its $|x_1|<|x_2|$ Laurent expansion are naturally arranged in the infinite grid
\[
\xymatrix @R=1pc {
 & 1 & x_1 & x_1^2 & x_1^3 & x_1^4 & x_1^5 & x_1^6 & x_1^7 & x_1^8 \\
x_2^{-1} & 1 & -2 & 1 \\
x_2^{-2} &  & 2  & -5 & 4 & -1\\
x_2^{-3} &  &    &  4 & -12 & 13 & -6 & 1 & & \\
x_2^{-4} &  & & & 8 \ar@/_/[u] & -28 \ar@/_/[u] & 38 \ar@/_/[u]& -25  \ar@/_/[u]& 8  \ar@/_/[u] & -1 \ar@/_/[u]  \\
x_2^{-5} &  & & &   & 16 \ar@/_1.3pc/[uu] & -64\ar@/_1.2pc/[uu] & 104\ar@/_1.2pc/[uu] & \ldots & \ldots & \\
x_2^{-6} &  & & & & & 32\ar@/_2pc/[uuu] & \ldots & \ldots & \ldots. &\\
\save "1,2"."1,10"*[F]\frm{} \restore
\save "2,1"."7,1"*[F]\frm{} \restore
\save "4,4"."4,4"*[F]\frm{} \restore
\save "4,5"."5,5"*[F]\frm{} \restore
\save "4,6"."6,6"*[F]\frm{} \restore
\save "4,7"."7,7"*[F]\frm{} \restore
\save "4,8"."7,8"*[F]\frm{} \restore
\save "5,9"."7,9"*[F]\frm{} \restore
\save "5,10"."7,10"*[F]\frm{} \restore
}
\]
In the {\em formal stable version} of \gr expansion of
$\KTp_{\A_2}^{a\to a+l}$ (Theorem \ref{thm:expansion}), these numbers
are exactly the coefficients of the corresponding \gr polynomials, for
{\em any $l$}, with an appropriate shift. To obtain a finite
expression, we sum these \gr polynomials first
in the vertical direction, and, as will we show, all but finitely many
of these partal sums will vanish, giving a correct finite expression
for $\KTp_{\A_2}$.

To obtain the  {\em minimal version} of our formula,
Theorem \ref{thm:expansion2}, the coefficients of $\KTp_{\A_2,a,a+l}$ for different $l$'s are obtained by 
different procedures from this grid of integers. For example, for $l=1$ we ``sweep up'' all numbers
from {\em below} the third row {\em to} the third row. That is,
replace the $(3,k)$ entry with the sum of entries $(r,k)$ for
$r\geq 3$ and then delete the rows from the 4th one down. This
sweeping is illustrated by the framed entries in the
picture. In the resulting table we get the numbers (reading along the
diagonals) $1,2,4$;  $-2,-5,-12+8=-4$; $1,4,13-28+16=1$; $-1$, and then {\em infinitely
many 0's}. These are exactly the coefficients in the {\em minimal} \gr
expansion of $\KTp_{\A_2}^{a\to a+1}$,  cf. (\ref{eqn:KTPsmall}).  To get $\KTp_{\A_2}^{a \to a+2}$ we
need to ``sweep'' the same table below the 4th row, for $l=3$ we sweep
from the 5th row, etc. The exact statement of this sweeping procedure
is given in Theorem \ref{thm:expansion2}.

As a result, we obtain the following minimal expansions:
\begin{align} \label{eqn:KTPsmall}
\KTp_{\A_2}^{a\to a}=&\big(G_{1,1}+2G_2\big)-\big(2G_{2,1}+G_3\big) + G_{3,1} \\
\notag \KTp_{\A_2}^{a\to a+1}=&\big(G_{2,2}+2G_{3,1}+4G_4\big)-\big(2G_{3,2}+5G_{4,1}+4G_5\big)+\big(G_{4,2}+4G_{5,1}+G_6\big) -G_{6,1}\\
\notag 
\begin{split} \KTp_{\A_2}^{a\to a+2}=&\big(G_{3,3}+2G_{4,2}+4G_{5,1}+8G_6\big)-\big(2G_{4,3}+5G_{5,2}+12G_{6,1}+12G_7\big) \\
& +\big(G_{5,3}+4G_{6,2}+13G_{7,1}+6G_8\big) -\big(G_{7,2}+6G_{8,1}+G_{9}\big)+G_{9,1}. 
\end{split}
\end{align}

\smallskip

It is remarkable that the third key feature, the positivity of
cohomological Thom polynomials extends to a rule of alternating signs
for {\em both} of our expansions. This result will be proved in
\S \ref{sec:pos}.

\medskip

\noindent {\bf Acknowledgement.}
The first author was supported by the Simons Foundation grant 523882. He is also grateful for the hospitality and financial
support of University of Geneva and CIB during his stay there while
parts of this research was done. The second author is partially
supported by FNS grants 156645, 159581 and 175799.

\section{Combinatorial definition of \gr polynomials}\label{sec:combgr}
In this section we will review the traditional definition of various versions of \gr polynomials. We
follow the references \cite{LS, FK1, FK2, B:Gr,B:combgr,
  buch}. 
Our goal in Sections 2-4 is to replace these traditional definitions with the residue
description of Definition \ref{def:g}. The reader not interested in
the traditional definitions can take Definition \ref{def:g} to be the
definition of double stable \gr polynomials and jump to Section
\ref{sec:fundclass}.

\medskip

We will use standard notations of algebraic combinatorics.
A permutation $w\in S_n$ will be represented by the sequence
$[w(1),w(2),\ldots,w(n)]$. The length of a permutation $\ell(w)$
is the cardinality of the set
$\{i<j: w(i)>w(j)\}$. We will identify $S_n$ with its image under the
natural embedding $S_n=\{w\in S_{n+1}|\; w(n+1)=n+1\}$.

\subsection{Double \gr polynomials} \label{sec:rew}
Double \gr polynomials  (in variables $x_i$, $y_j$) were introduced by Lascoux
and Schut\-zen\-ber\-ger \cite{LS}. In the present paper, following e.g.
\cite{B:Gr}, we perform the
rational substitutions $x_i=1-1/\alpha_i$ and $y_i=1-\beta_i$ in those
polynomials, and denote the resulting rational functions by
$\G_w(\alpha,\beta)$. To keep the terminology simple, we will continue
calling these functions ``\gr polynomials''.

The functions $\G_w(\alpha,\beta)$ are defined by the following  recursion:
\begin{itemize}
\item{} For the longest permutation $w_0=[n,n-1,\ldots,1] \in S_n$, let
  $$\G_{w_0}=\prod_{i+j\leq n} \left(1-\frac{\beta_i}{\alpha_j}\right).$$
\item{} Let $s_i$ be the $i$th elementary transposition. If $\ell(w s_i)=\ell(w)+1$ then
  $$\G_w=\pi_i(\G_{w s_i}),$$
  where the isobaric divided difference operator $\pi_i$ is defined by
  
   \begin{align*}
     \pi_i(f) & =  \frac{\alpha_i f(\ldots, \alpha_i,\alpha_{i+1},
                \ldots) - \alpha_{i+1}f( \ldots, \alpha_{i+1},\alpha_i,
                \ldots)}{\alpha_i-\alpha_{i+1}} \\
              & =  \frac{f(\ldots, \alpha_i,\alpha_{i+1}, \ldots)}{1-\alpha_{i+1}/\alpha_i} + \frac{f( \ldots ,\alpha_{i+1},\alpha_i, \ldots)}{1-\alpha_i/\alpha_{i+1}} .
   \end{align*}
\end{itemize}

\noindent For example, here is the list of double \gr polynomials for all $w\in S_3$
$$\G_{321}=\left( 1 -\frac{\beta_1}{\alpha_1}\right) \left( 1 -\frac{\beta_2}{\alpha_1}\right) \left( 1 -\frac{\beta_1}{\alpha_2}\right)  \qquad
\G_{231}=\left( 1 -\frac{\beta_1}{\alpha_1}\right) \left( 1 -\frac{\beta_1}{\alpha_2}\right)$$
$$\G_{312}=\left( 1 -\frac{\beta_1}{\alpha_1}\right) \left( 1 -\frac{\beta_2}{\alpha_1}\right) \qquad
\G_{213}=1 -\frac{\beta_1}{\alpha_1} \qquad \G_{132}= 1 -\frac{\beta_1\beta_2}{\alpha_1\alpha_2} \qquad \G_{123}=1.$$

\subsection{Stable versions} \label{sec:stable}

For a permutation $w\in S_n$ let $1^m \times w \in S_{m+n}$ be the permutation that is the identity on $\{1,\ldots,m\}$ and maps $j \mapsto w(j-m)+m$ for $j>m$. The double stable \gr polynomial $G_w(\alpha,\beta)$ is defined to be
\begin{equation}
  \label{eq:glim}
  G_w = \lim_{m\to \infty} \G_{1^m \times w}.
\end{equation}
For example,
$G_{21}=1-\frac{\beta_1\beta_2\beta_3\cdots}{\alpha_1\alpha_2\alpha_3\cdots}$. The
precise  definition of this limit may be found in
\cite{B:Gr}: roughly, rewritten in the $x$ and $y$ variables mentioned above, each
coefficient of $\G_{1^m \times w}$ stabilizes with $m$, and hence the
limit is defined as a formal power series in $x_i, y_j$ with the stabilized
coefficients.

\subsection{Truncated versions}\label{sec:trunc}
One usually considers specializations of double stable \gr polynomials of the type
\begin{equation} \label{eq:trunc}
 G_w^{k,l}(\alpha_1,\ldots,\alpha_k; \beta_1,\ldots,\beta_l)=
 G_w(\alpha_1,\ldots,\alpha_k,1,1,\ldots ; \beta_1,\ldots,\beta_l,1,1,\ldots).
\end{equation}
In fact, $G_w^{k,l}$ may be obtained by substituting
$\alpha_i=1, i>k$, $\beta_i=1, i>l$ in $\G_{1^m\times w}$ for
$m\gg k,l$. This way the truncated versions (\ref{eq:trunc}) may be
calculated without the $\lim_{m\to \infty}$ of \eqref{eq:glim}.

Below, we will drop the superscripts $k,l$ whenever they may be
determined from the number of $\alpha$ and $\beta$ variables.

In the case $l=0$,  we will simply write $G_w(\alpha_1,\ldots,\alpha_k)$.

\subsection{Stable \gr polynomials parametrized by partitions.} As
usual, a weakly decreasing sequence of nonnegative integers
$\lambda=(\lambda_1,\ldots,\lambda_r)$ will be called a {\em
  partition}. We will identify two partitions if they differ by a
sequence of $0$'s, and we define $L(\lambda)$, the {\em length} of a partition $\lambda$
to be the largest $i$ for which $\lambda_i>0$. The {\em Grassmannian
permutation associated to a partition $\lambda$ with descent in position $p$} is
the permutation 
\[
w_\lambda(i) =
\begin{cases}
  w_\lambda(i)=i+\lambda_{p+1-i} \text{ for }i\leq p,\text{ and }\\
w_\lambda(i)<w_\lambda(i+1) \text{ unless }i=p.
\end{cases}
 \]
 Note that necessarily $p\ge L(\lambda)$.

We define the {\em double stable \gr polynomial $G_\lambda$ of the partition $\lambda$} as
$G_{w_\lambda}(\alpha;\beta)$. It is easy to show that this definition does not depend on the choice of $p$ above.

\subsection{Stable \gr polynomials parametrized by integer sequences.} The notion $G_\lambda$ (with $\lambda$ a partition) is extended to $G_I$ where $I \in \Z^r$ is any finite integer sequence---by repeated applications of the {\em straightening laws}
\begin{eqnarray}\label{eqn:straightening1}
G_{I,p,q,J} & = & \sum_{k=p+1}^q G_{I,q,k,J} - \sum_{k=p+1}^{q-1} G_{I,q-1,k,J} \qquad \text{if}\ p<q,
\\
G_{I,p}& = & G_{I,0}=G_I \qquad \text{if}\ p<0. \label{eqn:straightening2}
\end{eqnarray}

\section{Properties of \gr polynomials}
We will need the following three properties of \gr polynomials.

\begin{proposition} \cite{FK2}, \cite[(2)]{buch} \label{prop:G super}
  The polynomial
  $G_w(\alpha_1,\ldots,\alpha_k;\beta_1,\ldots,\beta_l)$ is
  $S_k \times S_l$-supersymmetric, i.e. it is symmetric in the
  $\alpha_i$ and the $\beta_j$ variables separately, and satisfies
$$G_w(\alpha_1,\ldots,\alpha_{k-1},t; \beta_1,\ldots,\beta_{l-1},t)=G_w(\alpha_1,\ldots,\alpha_{k-1};
\beta_1,\ldots, \beta_{l-1}).$$
In particular, the left hand side of this equality does not depend on $t$.
\end{proposition}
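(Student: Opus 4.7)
The plan is to reduce both claims to the case of partition indices, and then invoke the classical combinatorial formulas for $G_\lambda$.

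For the symmetry in each variable set, I would start from the set-valued tableau formula of \cite{B:combgr} for partition-indexed $G_\lambda$, in which the contribution of a tableau is a monomial depending only on the multiset of entries of each ``color'' (one color for the $\alpha$-variables, one for the $\beta$-variables). The two independent symmetries $S_k \times S_l$ are then visible at a glance. Alternatively, at the recursive level: $\G_{1^m \times w_\lambda}$ is built from $\G_{w_0} \in S_{m+n}$ by isobaric divided differences, and since $\pi_i$ acts as the identity on $s_i$-invariants, passing to the stable limit $G_{w_\lambda} = \lim_{m \to \infty} \G_{1^m \times w_\lambda}$ combined with the truncation \eqref{eq:trunc} forces $S_k$-symmetry in $\alpha_1,\ldots,\alpha_k$; the $S_l$-symmetry in $\beta_1, \ldots, \beta_l$ follows by an analogous (dual) argument on the other side.

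For the cancellation property, which is the main content, I would again work first at the level of partition indices and use Buch's tableau formula. After the substitution $\alpha_k = \beta_l = t$, the plan is to construct a sign-reversing, weight-preserving involution on the set-valued tableaux that use at least one of the labels ``$k$'' or ``$l$''; the fixed points are exactly those tableaux avoiding both labels, and their contribution is precisely $G_\lambda(\alpha_1,\ldots,\alpha_{k-1}; \beta_1,\ldots,\beta_{l-1})$. This is the essence of the proof sketched in \cite[eq.\ (2)]{buch}.

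The passage from partition-indexed $G_\lambda$ to the arbitrary integer-sequence case $G_I$ is then mechanical: the straightening laws \eqref{eqn:straightening1}--\eqref{eqn:straightening2} express $G_I$ as a $\Z$-linear combination of partition-indexed $G_\mu$'s, and both the symmetry and the cancellation property survive any such linear combination termwise. The main obstacle lies squarely in the involution constructed in the cancellation step: one must simultaneously match the ``$\alpha_k$'' and ``$\beta_l$'' contributions while respecting the semistandard/set-valued constraints on the tableaux --- this is where all the genuine combinatorial work sits, and it is precisely the kind of argument the residue formula of this paper will later allow one to bypass entirely.
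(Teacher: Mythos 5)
The paper offers no proof of this proposition at all: it is quoted from the literature (Fomin--Kirillov \cite{FK2} and Buch \cite[(2)]{buch}), so there is no in-paper argument to compare yours against; the only ``approach'' the paper takes is citation. Judged as a self-contained proof, your sketch has concrete gaps. First, the statement concerns $G_w$ for an \emph{arbitrary permutation} $w$, while your reduction only moves between partition-indexed $G_\lambda$ and integer-sequence-indexed $G_I$ via the straightening laws \eqref{eqn:straightening1}--\eqref{eqn:straightening2}; those laws go from sequences to partitions, not from general permutations to partitions. To reduce $G_w$ to the $G_\lambda$'s you would need the expansion of stable \gr polynomials in the basis $\{G_\lambda\}$ (the main theorem of \cite{B:Gr}), a substantial result you never invoke --- or else you must argue directly with the Fomin--Kirillov presentation of $\G_w(\alpha;\beta)$, which you do not do.

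Second, the symmetry is not ``visible at a glance'' from the set-valued tableau formula: the weight of a tableau depends only on its content, but the family of tableaux with a given content is not manifestly stable under permuting the alphabet, so one needs a Bender--Knuth-type involution there as well. Your alternative divided-difference argument is the right idea on the $\alpha$-side (the ascents of $1^m\times w$ at positions $1,\dots,m$ put $\G_{1^m\times w}$ in the image of $\pi_i$, and that image is $s_i$-invariant), but on the $\beta$-side it silently relies on a dual recursion in the second alphabet that neither you nor the paper states. Finally, the cancellation identity --- the actual content of the proposition --- is left as an unconstructed sign-reversing involution, which you yourself flag as ``where all the genuine combinatorial work sits''; you have therefore outlined a plan that defers the key step rather than given a proof. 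The defensible options are either to carry out that involution (after fixing a precise double set-valued tableau formula for $G_\lambda(\alpha;\beta)$) together with the reduction from general $w$, or simply to cite \cite{FK2} and \cite[(2)]{buch}, as the paper does.
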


The next statement is an easy application of the Fomin-Kirillov formulas
\cite{FK1}, and also follows directly from the set-valued tableau description in \cite{B:Gr}.

\begin{proposition}  \label{prop:G vanish}
Let $\lambda=(\lambda_1,\ldots,\lambda_r)$ be a partition with
$\lambda_r>0$ and  let $0<k<r$. Then $G_\lambda(\alpha_1,\ldots,\alpha_k)=0$.
\end{proposition}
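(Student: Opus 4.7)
The plan is to reduce the statement to the case of no $\beta$ variables and then invoke Buch's set-valued tableau formula for single stable \gr polynomials. By the truncation convention of \S\ref{sec:trunc}, $G_\lambda(\alpha_1,\ldots,\alpha_k)$ is shorthand for $G_\lambda$ with $\alpha_i = 1$ for all $i > k$ and all $\beta_j = 1$. Passing to the variables $x_i = 1 - 1/\alpha_i$ and $y_j = 1 - \beta_j$ used in \cite{B:Gr}, this specialization amounts to $x_i = 0$ for $i > k$ and $y_j = 0$ for all $j$, so our quantity equals the single stable Grothendieck polynomial (in Buch's conventions) evaluated at $(x_1,\ldots,x_k,0,0,\ldots)$.

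Next, I would apply Buch's combinatorial formula
\[
G_\lambda(x_1,x_2,\ldots) \;=\; \sum_T\, (-1)^{|T|-|\lambda|}\, x^T,
\]
where $T$ ranges over set-valued tableaux of shape $\lambda$: assignments of a nonempty set of positive integers to each box of $\lambda$ with weakly increasing rows and strictly increasing columns (every element of an upper box strictly less than every element of the box below). Setting $x_i = 0$ for $i > k$ restricts the sum to those $T$ whose entries all lie in $\{1,\ldots,k\}$.

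The last step is the combinatorial observation that no such tableau can exist. Since $\lambda_r > 0$, the leftmost column of $\lambda$ contains $r$ boxes $T(1,1),\ldots,T(r,1)$; the strict-column condition gives $\max T(i,1) < \min T(i+1,1)$ for $i = 1,\ldots,r-1$, so the union $\bigcup_{i=1}^r T(i,1)$ must contain at least $r$ distinct positive integers. Since all entries lie in $\{1,\ldots,k\}$ and $k < r$, this is impossible. Hence the restricted sum is empty and $G_\lambda(\alpha_1,\ldots,\alpha_k) = 0$. There is no real obstacle here; once one aligns the paper's double-variable truncation with Buch's single-variable set-up, the tableau non-existence argument is immediate.
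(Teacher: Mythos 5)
Your proof is correct and takes essentially the same route the paper indicates: the paper offers no written argument beyond noting that the proposition ``follows directly from the set-valued tableau description in \cite{B:Gr}'' (or from the Fomin--Kirillov formulas), and your write-up simply fills in that route --- identifying the truncation $G_\lambda(\alpha_1,\ldots,\alpha_k)$ with the specialization $x_i=0$ for $i>k$, $y_j=0$, and then using the first-column strictness to rule out any set-valued tableau with entries in $\{1,\ldots,k\}$ when $k<r$. No gaps.
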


\begin{proposition} \label{prop:many_vars}
Let $\lambda=(\lambda_1,\ldots,\lambda_r)$ be a partition with $\lambda_r\geq 0$. We have
\[
G_\lambda(\alpha_1,\ldots,\alpha_r)=\sum_{\sigma\in S_r}
\frac{  \prod_{i=1}^r \left( 1 - {1/\alpha_{\sigma(i)}}\right)^{\lambda_i+r-i} }
{\prod_{i>j} \left( 1- \alpha_{\sigma(i)} /\alpha_{\sigma(j)} \right)\ \ \ \ \ \ }.
\]
\end{proposition}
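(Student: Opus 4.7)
\emph{Plan.} The approach combines the Grassmannian permutation description of $G_\lambda$ with the classical symmetrization formula for the isobaric divided difference of the longest element $w_0^{(r)}\in S_r$. The proof splits into two independent parts.

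I would first prove the symmetrization identity
\[
\pi_{w_0^{(r)}}(f)(\alpha_1,\ldots,\alpha_r) = \sum_{\sigma\in S_r}\frac{f(\alpha_{\sigma(1)},\ldots,\alpha_{\sigma(r)})}{\prod_{i<j}(1-\alpha_{\sigma(j)}/\alpha_{\sigma(i)})}
\]
by induction on $r$, using the two-term formula for $\pi_i$ from \S\ref{sec:rew} together with the staircase reduced word $w_0^{(r)} = (s_1 s_2\cdots s_{r-1})\,w_0^{(r-1)}$; the inductive step reduces to a direct partial-fraction manipulation.

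Second, I would establish the ``BGG-type'' presentation
\[
G_\lambda(\alpha_1,\ldots,\alpha_r) = \pi_{w_0^{(r)}}\!\left(\prod_{i=1}^r (1-1/\alpha_i)^{\lambda_i + r - i}\right).
\]
By \S\ref{sec:stable}--\ref{sec:trunc}, the left-hand side equals $\G_{1^m\times w_\lambda}$ specialized at $\alpha_i=1$ for $i>r$ and $\beta_j=1$ for all $j$, with $m$ large and $w_\lambda$ the Grassmannian permutation with descent at $p=r$. Starting from $\G_{w_0^{(N)}}=\prod_{i+j\le N}(1-\beta_i/\alpha_j)$, one chooses a reduced word for $w_\lambda^{-1}w_0^{(N)}$ executing the ``outer'' simple transpositions (those outside the $w_0^{(r)}$-subword acting on the first $r$ variables) first and the $w_0^{(r)}$-part last. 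The key computational identity is $\pi_i(1-\beta/\alpha_i)=1$ for any constant $\beta$, a quick check from the two-term formula in \S\ref{sec:rew}; iterating it together with the pull-out rule $\pi_i(fg)=f\cdot\pi_i(g)$ for $s_i$-symmetric $f$, and then applying the specializations, collapses $\G_{w_0^{(N)}}$ to precisely $\prod_{i=1}^r(1-1/\alpha_i)^{\lambda_i+r-i}$ before the final $\pi_{w_0^{(r)}}$ is applied.

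Combining the two parts and relabeling $(i,j)\leftrightarrow(j,i)$ in the denominator (so that $\prod_{i<j}(1-\alpha_{\sigma(j)}/\alpha_{\sigma(i)})=\prod_{i>j}(1-\alpha_{\sigma(i)}/\alpha_{\sigma(j)})$) yields the proposition. The main obstacle is the second part: arranging a reduced word so that the outer divided differences act transparently, and checking that the surviving count of $(1-\beta_j/\alpha_i)$-factors at row $i$ is exactly $\lambda_i+r-i$. This count is controlled by the code $(\lambda_r,\lambda_{r-1},\ldots,\lambda_1,0,\ldots)$ of $w_\lambda$; a small rank-$2$ sanity check (say $\lambda=(2,1)$, starting from $\G_{w_0^{(4)}}$, where one verifies that after applying $\pi_2$ and $\pi_3$ and setting $\beta_j=1$ the intermediate expression is exactly $(1-1/\alpha_1)^3(1-1/\alpha_2)$) both confirms the exponent count and dictates the structure of the reduced word in general.
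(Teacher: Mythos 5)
Your Part 1 (the symmetrization formula for $\pi_{w_0^{(r)}}$) is fine and is exactly the closed formula the paper uses, and your overall skeleton --- $G_\lambda=\pi_{w_0^{(r)}}\bigl(\prod_{i=1}^r(1-1/\alpha_i)^{\lambda_i+r-i}\bigr)$ plus symmetrization --- coincides with the paper's proof. The gap is in Part 2, in how you produce the input of $\pi_{w_0^{(r)}}$. You descend from $\G_{w_0^{(N)}}$ to $\G_{1^m\times w_\lambda}$ with $m$ large, and claim that the ``outer'' divided differences act transparently, using only $\pi_i(1-\beta/\alpha_i)=1$ and the pull-out rule for $s_i$-symmetric factors, so that the expression collapses to $\prod_{i=1}^r(1-1/\alpha_i)^{\lambda_i+r-i}$ before the final $\pi_{w_0^{(r)}}$. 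This mechanism is insufficient once $m\ge 1$: the intermediate permutation $v=(1^m\times w_\lambda)\,w_0^{(r)}$, to which $\pi_{w_0^{(r)}}$ must be applied, is not dominant, so along any reduced chain from $w_0^{(N)}$ the intermediate Grothendieck polynomials stop being products of linear factors, and individual $\pi_i$'s do not simply delete factors. Concretely, for $\lambda=(1,1)$, $r=2$, $m=1$ one has $1^1\times w_\lambda=[1,3,4,2]$, $v=[3,1,4,2]$, and a direct computation gives $\G_{[3,1,4,2]}=(1-\beta_1/\alpha_1)(1-\beta_2/\alpha_1)\bigl(1-\beta_1\beta_2/(\alpha_2\alpha_3)\bigr)$: the step $\pi_2$ merges two linear factors into a genuinely new one, and only after the specializations $\beta_j=1$, $\alpha_3=1$ does this collapse to $(1-1/\alpha_1)^2(1-1/\alpha_2)$. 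Proving this collapse for all $m$ and all $\lambda$ is essentially the stability statement itself, and it is the missing content of your argument; your rank-$2$ sanity check with $\lambda=(2,1)$ ``starting from $\G_{w_0^{(4)}}$'' is the $m=0$ case, where the intermediate permutation is the dominant $\bar w_\lambda$ and everything is easy, so it does not test the actual difficulty.

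The paper avoids this entirely: it works with $w_\lambda$ itself in only $r$ variables, writes $w_\lambda=\bar w_\lambda\, w_0^{(r)}$ with $\bar w_\lambda$ dominant, uses the classical closed product formula $\G_{\bar w_\lambda}=\prod_{i=1}^r(1-1/\alpha_i)^{\lambda_i+r-i}$ (after $\beta_j=1$), applies the symmetrization formula, and then quotes the standard identification $\G_{w_\lambda}(\alpha_1,\ldots,\alpha_r)=G_\lambda(\alpha_1,\ldots,\alpha_r)$ valid when the number of variables is at least $L(\lambda)$. To repair your write-up, either invoke that stability fact as the paper does (with a reference to Buch/Fomin--Kirillov), or genuinely prove the collapse $\G_v|_{\beta_j=1,\;\alpha_i=1\,(i>r)}=\prod_{i=1}^r(1-1/\alpha_i)^{\lambda_i+r-i}$, which requires more than the two computational identities you list.
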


\begin{proof}
Consider the permutation
\[
\bar{w}_\lambda=
\lambda_1+r, \lambda_2+r-1, \ldots, \lambda_r+1, i_1, \ldots, i_s
\]
where $i_j<i_{j+1}$ for all $j$, and $s$ is sufficiently large to make
this a permutation.  The permutation $\bar{w}_\lambda$ is a so-called
dominant permutation. 
For dominant permutations the recursive definition
of Section \ref{sec:rew} can be solved explicitly (\cite{LS}, or see
the diagrammatic description in \cite{FK1}), and we obtain
\[
\G_{\bar{w}_\lambda}(\alpha_1,\ldots,\alpha_r)=\prod_{i=1}^r \left( 1 - \frac{1}{\alpha_{i}}\right)^{\lambda_i+r-i}.
\]
Observe that $\bar{w}_\lambda \cdot w_0 = w_\lambda$, where $w_0$ is the longest permutation of $1,\ldots,r$. Hence
\begin{equation}\label{lala}
\G_\lambda(\alpha_1,\ldots,\alpha_r)=\G_{w_\lambda}(\alpha_1,\ldots,\alpha_r)=\pi_{w_0(r)} \left(
\prod_{i=1}^r \left( 1 - \frac{1}{\alpha_{i}}\right)^{\lambda_i+r-i} \right),
\end{equation}
where 
\[\pi_{w_0(r)}(f)=(\pi_1\pi_2\ldots \pi_{r-1})(\pi_1\pi_2\ldots\pi_{r-2})\ldots(\pi_1)(f)=\sum_{\sigma\in S_r}\sigma\left(\frac{f}{\prod_{i>j}(1-\alpha_i/\alpha_j)}\right).
\]
The right-hand side of (\ref{lala}) is equal to the right-hand side of
the displayed formula in the Proposition.  If the number of $\alpha$
variables is at least the length of the partition, then
$\G_\lambda(\alpha)=G_\lambda(\alpha)$, which concludes our proof.
\end{proof}

Note that Proposition \ref{prop:many_vars} may be used whenever the number of $\alpha$ variables is larger than the length of the partition, because we can append 0's to the end of $\lambda$ to make the condition satisfied.

\section{\gr polynomials in residue form}\label{sec:ResGr}

In this section we introduce a residue calculus for \gr polynomials
and show how this new formalism helps to understand some of their
properties.

Let $z$ be a complex variable, and introduce the notation
\[\Res_{z=0,\infty} f(z)\,dz= \Res_{z=0} f(z)\,dz + \Res_{z=\infty}
f(z)\,dz.\]
The following property of $\Res_{z=0,\infty}$ is straightforward.
\begin{lemma}\label{lem:residue0}
Let $0\leq a\leq s-r-2$ and let
\[
f(z)=z^a\cdot \frac{\prod_{i=1}^r (z-x_i)}{\prod_{i=1}^s (z-y_i)}
\]
for non-zero complex numbers $x_i, y_i$. Then $\Res_{z=0,\infty} f(z)\,dz =0$. \qed
\end{lemma}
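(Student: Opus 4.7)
The plan is to observe that under these hypotheses each of the two residues vanishes individually, so there is nothing to cancel.

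First I would handle the residue at $0$. Since $a\geq 0$, the factor $z^a$ is holomorphic at $z=0$, and since each $y_i\neq 0$, the denominator $\prod_{i=1}^s(z-y_i)$ is nonvanishing at $z=0$. Thus $f$ extends to a function holomorphic in a neighborhood of $z=0$, and $\Res_{z=0} f(z)\,dz=0$.

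Next I would count degrees at infinity. The numerator $z^a\prod_{i=1}^r(z-x_i)$ has degree $a+r$ and the denominator has degree $s$, so $f(z)=O(z^{a+r-s})$ as $z\to\infty$. The hypothesis $a\leq s-r-2$ gives $a+r-s\leq -2$, so $f(z)=O(z^{-2})$ as $z\to\infty$. Writing $w=1/z$, $dz=-dw/w^2$, this means $f(z)\,dz$ is holomorphic at $w=0$, hence $\Res_{z=\infty} f(z)\,dz=0$. Adding the two vanishing contributions gives the claimed equality.

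There is really no main obstacle here; the only point to be careful about is the convention for the residue at infinity, but once one recalls that $\Res_{z=\infty}f(z)\,dz$ is zero precisely when $f$ decays like $O(z^{-2})$ or faster, the condition $a\leq s-r-2$ is exactly what is needed.
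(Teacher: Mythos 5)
Your argument is correct: holomorphy at $z=0$ (since $a\ge 0$ and $y_i\neq 0$) kills the residue at $0$, and the degree count $a+r-s\le -2$ kills the residue at infinity, which is exactly the ``straightforward'' verification the paper has in mind (it states the lemma without proof). The only nitpick is that $O(z^{-2})$ decay is sufficient, not equivalent, to the vanishing of $\Res_{z=\infty}$, but that does not affect the proof.
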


\subsection{Residue form of double stable \gr polynomials} \label{sec:defg}

Let $z_1,\ldots,z_r$ be complex variables. For nonnegative integers
$k,l$, define the differential form
\begin{equation}
  \label{eqn:defM}
M_{k,l}(z_1,\ldots,z_r)=
\prod_{j=1}^r
\frac{ \prod_{i=1}^l (1-z_j\beta_i) }{ \prod_{i=1}^k (1-z_j\alpha_i) (1-z_j)^{l-k}}
\cdot \prod_{j=1}^r \frac{dz_j}{z_j}.
\end{equation}
When it causes no confusion, we will omit the indices $k$ an $l$, and
denote the vector $(z_1,\dots z_r)$ by $z$: thus we will write $M(z)$
for $M_{k,l}(z_1,\ldots,z_r)$.
\begin{definition} \label{def:g} For an integer sequence $I\in \Z^r$,  define the
  $g$-polynomial as
\begin{equation} \label{eqn:defg}
g_I(\alpha_1,\ldots,\alpha_k; \beta_1,\ldots,\beta_l)=\Res_{z_1=0,\infty}\ldots \Res_{z_r=0,\infty} \left( \prod_{j=1}^r (1-z_j)^{I_j-j} \prod_{i>j} \left( 1-\frac{z_i}{z_j} \right)M_{k,l}(z_1,\ldots,z_r)
\right).
\end{equation}
\end{definition}

\begin{remark}
  \label{rem:normalcross}
In general, iterated residue formulas are sensitive to the order in
which one takes the residues $\Res_{z_i}$---see for example
\cite{bsz,kaza:gysin,kaza:noass,ts}---due to factors of the type
$z_i-z_j$ in the denominator. However, the denominators in
(\ref{eqn:defg}) are linear factors each depending on a single variable,
and hence the order in this case does not matter.
\end{remark}
The following is evident from Definition \ref{def:g}.
\begin{lemma} \label{lem:g_super}
  We have
  \begin{equation}
    \label{eqn:gred}
 g_I(\alpha_1,\ldots,\alpha_k,1;
  \beta_1,\ldots,\beta_l)=g_I(\alpha_1,\ldots,\alpha_k;
  \beta_1,\ldots,\beta_l,1)=g_I(\alpha_1,\ldots,\alpha_k;
  \beta_1,\ldots,\beta_l).
  \end{equation}
The function
$g_\lambda(\alpha_1,\ldots,\alpha_k;\beta_1,\ldots,\beta_l)$ is
supersymmetric: it is symmetric in the $\alpha_i$ and the $\beta_j$
variables separately, and we have
$$g_\lambda(\alpha_1,\ldots,\alpha_{k-1},t; \beta_1,\ldots,\beta_{l-1},t)=g_\lambda(\alpha_1,\ldots,\alpha_{k-1};
\beta_1,\ldots, \beta_{l-1}).$$
In particular, the left hand side does not depend on $t$. \qed
\end{lemma}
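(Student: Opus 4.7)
The plan is to isolate the $\alpha$- and $\beta$-dependence of the integrand in \eqref{eqn:defg}. Note that the only factor depending on $\alpha_i$ or $\beta_j$ is $M_{k,l}(z_1,\dots,z_r)$; the prefactor $\prod_j (1-z_j)^{I_j-j}\prod_{i>j}(1-z_i/z_j)\prod_j dz_j/z_j$ involves only the $z_j$'s and is invariant under all the operations we need to check. Hence all three claims reduce to matching identities for $M_{k,l}$ itself, after which the iterated residue is applied to identical differential forms.

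First I would verify \eqref{eqn:gred}. Specializing $\alpha_{k+1}=1$ in $M_{k+1,l}$, the new denominator factor $(1-z_j\alpha_{k+1})=(1-z_j)$ contributes an extra $(1-z_j)$, while the exponent of $(1-z_j)^{l-k'}$ decreases from $l-k$ to $l-(k+1)$; these two changes cancel, so $M_{k+1,l}\big|_{\alpha_{k+1}=1}=M_{k,l}$. Symmetrically, specializing $\beta_{l+1}=1$ in $M_{k,l+1}$ yields an extra numerator factor $(1-z_j)$, which cancels against the increased exponent in the denominator; so $M_{k,l+1}\big|_{\beta_{l+1}=1}=M_{k,l}$ as well.

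Next, the separate $S_k$-symmetry in the $\alpha_i$ and $S_l$-symmetry in the $\beta_j$ is immediate from the product formula \eqref{eqn:defM}, since $M_{k,l}$ is visibly symmetric in each group of variables and the rest of the integrand is independent of them. Finally, for the cancellation property, setting $\alpha_k=\beta_l=t$ in $M_{k,l}$ pairs a numerator factor $\prod_j(1-z_jt)$ against a denominator factor $\prod_j(1-z_jt)$; after this cancellation the exponent of $(1-z_j)$ in the denominator is still $l-k=(l-1)-(k-1)$, so the result equals $M_{k-1,l-1}$ evaluated at $(\alpha_1,\dots,\alpha_{k-1};\beta_1,\dots,\beta_{l-1})$, and the full integrand collapses to the one defining $g_I$ in the smaller alphabet.

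There is no substantive obstacle here: each claim is a direct manipulation of the product in \eqref{eqn:defM}, and the only subtlety worth flagging is that the order of the iterated residues is irrelevant, which is already justified by Remark \ref{rem:normalcross}. The supersymmetry of $g_I$ for partitions $I=\lambda$ is then a formal consequence, so the statement for general integer sequences in fact follows as well.
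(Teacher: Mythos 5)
Your proof is correct and matches the paper's treatment: the paper simply declares the lemma evident from Definition \ref{def:g}, and your write-up is exactly the direct verification intended, namely that specializing $\alpha_{k+1}=1$, $\beta_{l+1}=1$, or $\alpha_k=\beta_l=t$ in $M_{k,l}$ produces cancellations with the $(1-z_j)^{l-k}$ factor so that the integrand coincides with that of the smaller alphabet, after which the iterated residues agree. The observations about the $z$-only prefactor and the irrelevance of residue order (Remark \ref{rem:normalcross}) are appropriate and complete the argument.
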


\begin{theorem} \label{thm:G=g}
For any integer sequence $I$, and nonnegative integers $k,l$, we have
\[
G_I(\alpha_1,\ldots,\alpha_k;\beta_1,\ldots,\beta_l)=g_I(\alpha_1,\ldots,\alpha_k;\beta_1,\ldots,\beta_l).
\]
\end{theorem}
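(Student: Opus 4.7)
My plan is to mirror the two-layered structure of the traditional definition of $G_I$: first check the identity for partition-indexed Grothendieck polynomials $G_\lambda$, and then verify that $g_I$ obeys the same straightening laws \eqref{eqn:straightening1}--\eqref{eqn:straightening2} as $G_I$, so that the equality propagates from partitions to arbitrary integer sequences.

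For the partition case, the goal is to match $g_\lambda$ with the explicit formula for $G_\lambda$ given in Proposition \ref{prop:many_vars}. Using the supersymmetry of $g$ from Lemma \ref{lem:g_super}, I first reduce to a regime where the number $k$ of $\alpha$-variables is large enough (say $k \geq l + L(\lambda)$) that the combined $(1-z_j)$-exponent $\lambda_j-j+k-l$ in the integrand is nonnegative; this ensures that the only finite nonzero poles of the $z_j$-integrand are simple poles at $z_j = 1/\alpha_i$. I then iterate the residue theorem on $\mathbb{P}^1$: each $\Res_{z_j=0,\infty}$ converts to $-\sum_i \Res_{z_j = 1/\alpha_i}$, and the cross-factors $(1 - z_i/z_j)$ force vanishing whenever two $z_j$'s are evaluated at the same $1/\alpha$. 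The surviving contributions run over ordered injections $(i_1,\ldots,i_r) \hookrightarrow \{1,\ldots,k\}$, and after sign bookkeeping the result matches the $\beta$-inclusive analogue of Proposition \ref{prop:many_vars}. The equality then extends to arbitrary $(k,l)$ by exploiting that both sides are supersymmetric and agree on the diagonal locus where $\beta\subset\alpha$, combined with the structure theorem for supersymmetric functions (generated by super power sums $\sum\alpha_i^n - \sum\beta_j^n$), which forces any supersymmetric function vanishing on this locus to vanish identically.

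For general integer sequences, I verify that $g$ satisfies both straightening laws. The relation $g_{I,p} = g_I$ for $p \leq 0$ follows from a direct residue computation in the extra variable $z_{r+1}$: the residue theorem expresses $\Res_{z_{r+1}=0,\infty}$ as a sum of residues at $z_{r+1}=1$ and $z_{r+1} = 1/\alpha_i$, which collapses via a partial-fraction/Lagrange-interpolation identity to a factor of $1$. The more intricate relation $g_{I,p,q,J} = \sum_{k=p+1}^q g_{I,q,k,J} - \sum_{k=p+1}^{q-1} g_{I,q-1,k,J}$ reduces, after canceling common factors, to an identity in the two variables $z_s, z_{s+1}$: setting $u = 1-z_s$, $v = 1-z_{s+1}$, the finite geometric sums on the right can be summed explicitly, and the leftover difference must lie in the kernel of the iterated residue operator by Lemma \ref{lem:residue0} and its $(1-z_j)$-pole variants. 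The main obstacle is precisely this second law: it holds only modulo the residue kernel, not as a pointwise polynomial identity, and the interaction between the cross-factor $(1-z_{s+1}/z_s)$ and the varying exponents of $(1-z_s)$, $(1-z_{s+1})$ on the two sides requires careful bookkeeping to isolate which terms vanish upon iterated residue.
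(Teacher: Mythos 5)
Your overall architecture coincides with the paper's: prove that $g$ obeys the straightening laws so that the integer-sequence case reduces to partitions, then use supersymmetry to reduce the partition case to a specialization with only $\alpha$-variables, and finally evaluate the iterated residue by the Residue Theorem and match it with Proposition \ref{prop:many_vars}. The genuine gap is the first straightening law \eqref{eqn:straightening1} for $g$, i.e.\ \eqref{eqn:grepl}, which you yourself flag as ``the main obstacle'' and never actually prove. Your proposed mechanism --- sum the geometric series in $k$ and argue that the leftover difference lies in the kernel of the iterated residue by Lemma \ref{lem:residue0} ``and its variants'' --- does not go through as stated: summing $\sum_k(1-z_{s+1})^{k-s-1}$ introduces the denominator $1-(1-z_{s+1})=z_{s+1}$, which together with the measure $dz_{s+1}/z_{s+1}$ produces a double pole at $z_{s+1}=0$, so the resulting terms are not of the form $z^a\prod(z-x_i)/\prod(z-y_i)$ with $a\ge 0$ covered by Lemma \ref{lem:residue0}, and you offer no substitute argument for the vanishing of their iterated residue. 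Since the theorem's content beyond partitions is exactly this reduction, the statement for general integer sequences remains unproved in your proposal.

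For comparison, the paper's Lemma \ref{lem:replace} does not use a kernel argument at all: it considers $g_{p,q}-g_{q-1,q}$, flips the cross factor via $(1-z_2/z_1)=-(z_2/z_1)(1-z_1/z_2)$, applies the exact telescoping identity $\bigl((1-z_1)^{p-1}-(1-z_1)^{q-2}\bigr)(1-z_2)^{q-2}(-z_2/z_1)=\sum_{k=p+1}^{q-1}\bigl((1-z_2)^{q-1}-(1-z_2)^{q-2}\bigr)(1-z_1)^{k-2}$, and then recognizes the result as $\sum g_{q,k}-\sum g_{q-1,k}$ with the roles of $z_1$ and $z_2$ exchanged, which is legitimate because the order of the residues is immaterial here (Remark \ref{rem:normalcross}); the auxiliary equality $g_{q-1,q}=g_{q,q}$ completes the law. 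Some such exact manipulation is what your sketch is missing. Two smaller points: the ``$\beta$-inclusive analogue of Proposition \ref{prop:many_vars}'' you invoke is not available in the paper and would itself require proof, although your fallback --- supersymmetry plus agreement on the locus where the $\beta$'s cancel against $\alpha$'s (the paper instead sets all $\beta_i=1$) --- renders it unnecessary; and your residue-theorem detour for \eqref{eqn:straightening2} is correct but more roundabout than the paper's direct observation that the residue at $z_r=0$ equals $1$ while the residue at infinity vanishes for $p\le 0$.
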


First we prove two lemmas.

\begin{lemma}\label{lem:replace}
Let $I$ and $J$ be integer sequences. Then we have
\begin{equation}\label{eqn:grepl}
g_{I,p,q,J}=\sum_{k=p+1}^q g_{I,q,k,J} - \sum_{k=p+1}^{q-1} g_{I,q-1,k,J} \qquad \text{if}\ p<q,
\end{equation}
and
\begin{equation}\label{eqn:gneg}
g_{I,p}=g_{I} \qquad \text{if}\ p\leq 0.
\end{equation}
\end{lemma}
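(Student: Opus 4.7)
The plan is to treat the two identities separately. For \eqref{eqn:gneg}, I would use a direct residue evaluation in the last variable. With $r = |I|+1$, the only factor in the integrand of $g_{I,p}$ depending on $z_r$ is
\[
h(z_r)\,\frac{dz_r}{z_r} = (1-z_r)^{p-r}\prod_{j<r}\left(1-\frac{z_r}{z_j}\right)\cdot\frac{\prod_{i=1}^{l}(1-z_r\beta_i)}{\prod_{i=1}^{k}(1-z_r\alpha_i)(1-z_r)^{l-k}}\cdot\frac{dz_r}{z_r}.
\]
At $z_r = 0$ all other factors evaluate to $1$, so the residue at $0$ equals $1$. A degree count gives the total degree of $h(z_r)/z_r$ at infinity as $(p-r)+(r-1)+l-k+(k-l)-1 = p-2 \le -2$ for $p\le 0$, so the residue at infinity vanishes. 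Hence $\Res_{z_r=0,\infty}$ of the $z_r$-dependent piece equals $1$, collapsing the iterated residue defining $g_{I,p}$ to the one defining $g_I$.

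For \eqref{eqn:grepl}, set $s = |I|+1$, $u = z_s$, $v = z_{s+1}$. The plan is to factor the integrand of $g_{I,a,b,J}$ as $\phi_1(z_1,\ldots,z_r)\cdot(1-v/u)\cdot(1-u)^{a-s}(1-v)^{b-s-1}$, where $\phi_1$ collects the remaining cross terms and $M_{k,l}$-factors. The crucial observation is that $\phi_1$ is independent of $(a,b)$ and is invariant under the swap $u\leftrightarrow v$: the cross terms involving $u$ or $v$ occur in matched pairs $(1-u/z_j)(1-v/z_j)$ for $j<s$ and $(1-z_i/u)(1-z_i/v)$ for $i>s+1$, while the $M_{k,l}$-factors at positions $s$ and $s+1$ simply exchange. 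Hence the difference of LHS and RHS integrands of \eqref{eqn:grepl} equals $\phi_1\cdot(1-v/u)\cdot E$ with
\[
E = (1-u)^{p-s}(1-v)^{q-s-1} - \sum_{k=p+1}^{q}(1-u)^{q-s}(1-v)^{k-s-1} + \sum_{k=p+1}^{q-1}(1-u)^{q-s-1}(1-v)^{k-s-1}.
\]

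The technical heart of the argument is the algebraic identity $E = u\,\tilde F(u,v)$ with $\tilde F$ symmetric in $u\leftrightarrow v$. Applying the geometric series identity $(1-x)^m - (1-x)^n = x\sum_{k=m+1}^n(1-x)^{k-1}$ to both $u$ and $v$, a direct reduction yields
\[
\tilde F(u,v) = (1-v)^{q-s-1}\sum_{k=p+1}^{q}(1-u)^{k-s-1} + (1-u)^{q-s-1}\sum_{k=p+1}^{q-1}(1-v)^{k-s-1},
\]
and a short calculation shows $\tilde F(u,v) - \tilde F(v,u) = 0$, since the two summands exchange up to the common term $(1-u)^{q-s-1}(1-v)^{q-s-1}$. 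Using $(1-v/u) = (u-v)/u$, multiplying through gives $(1-v/u)\cdot E = (u-v)\tilde F(u,v)$, so the difference of integrands equals $\phi_1\cdot(u-v)\tilde F$, which is antisymmetric under $u\leftrightarrow v$. By Remark \ref{rem:normalcross} the iterated residues commute, so $\Res_{u=0,\infty}\Res_{v=0,\infty}$ is symmetric under the swap of its two variables, and the residue of an antisymmetric integrand must vanish. This gives \eqref{eqn:grepl}. The main obstacle is the algebraic manipulation producing $E = u\tilde F$ with symmetric $\tilde F$; once it is in hand, the symmetry argument is immediate.
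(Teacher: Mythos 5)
Your proof is correct. For \eqref{eqn:gneg} it is the same argument as the paper's: the residue of the $z_r$-dependent factor at $0$ is $1$, and the degree count $p-2\le -2$ kills the residue at infinity. For \eqref{eqn:grepl} your route is genuinely different in its organization, though it rests on the same two ingredients (the geometric-series identity for powers of $(1-z_j)$ and the order-independence of the residues from Remark \ref{rem:normalcross}). The paper fixes $I=J=\emptyset$, manipulates $g_{p,q}-g_{q-1,q}$ so that the resulting integrand is literally the defining integrand of $g_{q,k}$ and $g_{q-1,k}$ with the roles of $z_1,z_2$ switched, and then still needs the auxiliary identity $g_{q-1,q}=g_{q,q}$, which it leaves to the reader. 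You instead assemble the whole difference of the two sides of \eqref{eqn:grepl} into a single iterated residue, factor its integrand as $\phi_1\cdot(u-v)\,\tilde F(u,v)$ with $\phi_1$ and $\tilde F$ symmetric under $u\leftrightarrow v$ (your identity $E=u\tilde F$ and the symmetry of $\tilde F$ check out), and conclude vanishing because an integrand antisymmetric in two variables has zero iterated residue once the residues commute; the commutation is legitimate here since all denominators, including any negative powers of $(1-u)$, $(1-v)$ coming from $\tilde F$, are single-variable factors. What this buys you: the boundary identity $g_{q-1,q}=g_{q,q}$ is absorbed automatically into the antisymmetry, and the general sequences $I,J$ are handled uniformly via the observation that the spectator factors pair up symmetrically, rather than being dismissed as ``treated similarly.'' The only point worth spelling out slightly more is that the residues in the remaining variables $z_j$, $j\neq s,s+1$, preserve the antisymmetry in $u\leftrightarrow v$ (they treat $u,v$ as parameters and their poles sit at points independent of $u,v$), so the final two-variable vanishing argument applies; this is immediate but deserves a sentence.
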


\begin{proof}
For  simplicity of notation,  we assume that $I=J=\emptyset$. The
general case is treated similarly.
For $p<q$ consider
\[ g_{p,q}-g_{q-1,q} = \Res_{z_1=0,\infty} \Res_{z_2=0,\infty} \left(
  (1-z_1)^{p-1}(1-z_2)^{q-2} - (1-z_1)^{q-2}(1-z_2)^{q-2} \right)
\left(1-\frac{z_2}{z_1}\right) \cdot M(z_1,z_2).
\]
 Applying the identities
$$\left( 1-\frac{z_2}{z_1}\right)  = -\frac{z_2}{z_1}\left( 1-\frac{z_1}{z_2}\right)$$
and
\[
\left( (1-z_2)^{q-2}(1-z_1)^{p-1} - (1-z_2)^{q-2}(1-z_1)^{q-2} \right) \left(-\frac{z_2}{z_1}\right) =
\hskip 6 true cm\]
\[
\ \hskip 5 true cm
\sum_{k=p+1}^{q-1} (1-z_2)^{q-1}(1-z_1)^{k-2} - \sum_{k=p+1}^{q-1} (1-z_2)^{q-2}(1-z_1)^{k-2},
\]
we obtain that $g_{p,q}-g_{q-1,q}$ equals
$$\Res_{z_1=0,\infty} \Res_{z_2=0,\infty}
\left( \sum_{k=p+1}^{q-1} (1-z_2)^{q-1}(1-z_1)^{k-2} - \sum_{k=p+1}^{q-1} (1-z_2)^{q-2}(1-z_1)^{k-2} \right) \left(1-\frac{z_1}{z_2}\right) \cdot M(z_1,z_2).
$$
Using the definition of $g$ with the role of $z_1$ and $z_2$ switched,
we obtain
$$g_{p,q}-g_{q-1,q}=\sum_{k=p+1}^{q-1} g_{q,k} - \sum_{k=p+1}^{q-1} g_{q-1,k}.$$
This is equivalent to (\ref{eqn:grepl}) up to the easy equality
$$g_{q-1,q}=g_{q,q},$$
whose proof we leave to the reader.

Formula (\ref{eqn:gneg}) immediately follows from the fact that, for
$p\leq 0$,
$$ \Res_{z_r=0}(1-z_r)^{p-r} \prod_{i=1}^{r-1} \left(1-\frac{z_r}{z_i}\right) M_{k,l}(z_r)=1,$$
while the residue of this expression at $z_r=\infty$ vanishes.
\end{proof}

\begin{lemma} \label{lem:g_vanish} Let
  $\lambda=(\lambda_1,\ldots,\lambda_r)$ be a partition with
  $\lambda_r>0$. Then for $k<r$, we have $g_\lambda(\alpha_1,\ldots,\alpha_k)=0$.
\end{lemma}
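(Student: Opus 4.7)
My plan is a two-step reduction: first bring the problem to the case $k=r-1$ by supersymmetry, then establish the vanishing there by an iterated-residue/pigeonhole argument.

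Step 1: Iterated application of Lemma~\ref{lem:g_super} yields
\[
g_\lambda(\alpha_1,\ldots,\alpha_k)=g_\lambda(\alpha_1,\ldots,\alpha_k,\underbrace{1,\ldots,1}_{r-1-k}),
\]
so it suffices to prove $g_\lambda(\alpha_1,\ldots,\alpha_{r-1})\equiv 0$ as a rational function in its arguments. Being a rational identity, this need only be verified for generic $\alpha_i$, say pairwise distinct and each different from $1$. The key simplification in this regime is that the exponent $\lambda_j+(r-1)-j$ of $(1-z_j)$ in the integrand (\ref{eqn:defg}) equals $\lambda_r-1\geq 0$ for $j=r$ and is at least $\lambda_j\geq\lambda_r\geq 1$ for $j<r$, so the integrand has no pole at $z_j=1$. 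Hence the only finite non-zero poles of the $z_j$-integrand are the simple poles at $z_j=1/\alpha_i$, $i=1,\ldots,r-1$.

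Step 2: By the residue theorem on $\mathbb P^1$, $\Res_{z_j=0,\infty}=-\sum_{i=1}^{r-1}\Res_{z_j=1/\alpha_i}$, and expanding the iterated residue accordingly writes $g_\lambda(\alpha_1,\ldots,\alpha_{r-1})$ as $(-1)^r\sum_\sigma\prod_j\Res_{z_j=1/\alpha_{\sigma(j)}}$ applied to the integrand, with $\sigma\colon\{1,\ldots,r\}\to\{1,\ldots,r-1\}$. By the pigeonhole principle no such $\sigma$ is injective; pick $j_1<j_2$ with $\sigma(j_1)=\sigma(j_2)=i$. Since residues are taken in the order $z_r,z_{r-1},\ldots,z_1$ and all of these poles are simple, $\Res_{z_{j_2}=1/\alpha_i}$ is just the substitution $z_{j_2}\mapsto 1/\alpha_i$. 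It turns the factor $(1-z_{j_2}/z_{j_1})$ of the integrand into $1-1/(\alpha_i z_{j_1})$, which has a simple zero at $z_{j_1}=1/\alpha_i$ cancelling the simple pole of $1/(1-z_{j_1}\alpha_i)$ there. Consequently $\Res_{z_{j_1}=1/\alpha_i}$ of the resulting regular expression is zero, and the $\sigma$-summand vanishes.

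The step I expect to be the real bookkeeping obstacle is verifying that the intermediate residues $\Res_{z_j=1/\alpha_{\sigma(j)}}$ for $j_1<j<j_2$ cannot reintroduce the pole at $z_{j_1}=1/\alpha_i$. Here the asymmetry of the product $\prod_{a>b}(1-z_a/z_b)$ is crucial: the only $z_{j_1}$-dependent factors these intermediate residues can alter are $(1-z_j/z_{j_1})$ with $j>j_1$, which after substitution become $1-1/(\alpha_{\sigma(j)}z_{j_1})$ -- regular at $z_{j_1}=1/\alpha_i$ when $\sigma(j)\neq i$, and contributing a further zero when $\sigma(j)=i$. Once this check is carried out, every $\sigma$-summand vanishes, so $g_\lambda(\alpha_1,\ldots,\alpha_{r-1})=0$ and Step~1 finishes the proof.
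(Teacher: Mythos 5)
Your proposal is correct and follows essentially the same route as the paper's proof: reduce to $k=r-1$ via the supersymmetry property (Lemma \ref{lem:g_super}), note that the positivity of the exponents of $(1-z_j)$ removes any pole at $z_j=1$, convert $\Res_{z_j=0,\infty}$ into a sum over the simple poles $z_j=1/\alpha_i$ by the Residue Theorem, and kill every term by pigeonhole via the vanishing factors $\bigl(1-\alpha_{i_m}/\alpha_{i_j}\bigr)$. Your extra bookkeeping about intermediate substitutions not reintroducing the cancelled pole is a slightly more explicit rendering of the same cancellation the paper records in its final product formula.
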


\begin{proof} First note that, according to 
Lemma \ref{lem:g_super}, the case $k=r-1$  implies the case $k<r$.

Now assume $k=r-1$, and introduce the temporary notation $\omega$
for the differential form in~\eqref{eqn:defg}. Assume that the values
of the $\alpha_i$s are all different.
 
We calculate the first residue $\Res_{z_r=0,\infty}\omega$, taking
into account Remark \ref{rem:normalcross} and applying the 1-variable
Residue Theorem.  The exponent $I_r-r+k-l$ of the factor
$(1-z_r\beta_i)$ is nonnegative, since $I_r=\lambda_r>0$, $k=r-1$, and
$l=0$, and hence there is no pole at $z_r=1$. The remaining poles are
thus the points $z_r=1/\alpha_i$, $i=1,\ldots,k$, and each of these
poles is simple. The residue at the simple pole $z_r=1/\alpha_i$, up
to a factor of $-\alpha_i$ is obtained by omitting the factor
$(1-\alpha_iz_r)$ in the denominator, and then substituting into the
remainder $z_r=1/\alpha_i$. Continuing the application of residues in
\eqref{eqn:defg}, we obtain a sum over all choices of indices
$1\le i_j\le k$, $j=1,\dots r$, of terms of the following form
\[   \prod_{j=1}^r (1-\alpha_{i_j})^{\epsilon}
  \prod_{m>j}\left(1-\frac{\alpha_{i_m}}{\alpha_{i_j}}\right)
\widetilde{M},
\]
where $\epsilon\ge0$ and $\widetilde{M}$ is some rational expression
in the $\alpha$'s. The
relevant factor in the product is the second one, which vanishes as
long as $i_m=i_j$ for some $1\le j<m\le r$. As $k<r$, this is
certainly the case, and this completes the proof.
\end{proof}

Now we are ready to prove Theorem \ref{thm:G=g}.

\begin{proof}
  Since both $g$ and $G$ are supersymmetric (Proposition \ref{prop:G
    super} and Lemma \ref{lem:g_super}), it is sufficient to prove
  $G_\lambda=g_\lambda$ for the $\beta_1=\beta_2=\ldots=1$
  substitution. For that substitution, both $g_\lambda$ and
  $G_\lambda$ vanish if the number of $\alpha$'s is less then the
  length of $\lambda$ (see Proposition \ref{prop:G vanish} and Lemma
  \ref{lem:g_vanish}).

  Let $\lambda=(\lambda_1,\ldots,\lambda_r)$ and consider formula
  (\ref{eqn:defg}) for $k=r, l=0$. We will apply the Residue Theorem
  for each residue $\Res_{z_i=0,\infty}$, i.e. we replace
  $\Res_{z_i=0,\infty}$ by $-\sum_p \Res_{z_i=p}$ where sum runs over
  all poles different from $0$ and $\infty$. We claim that the only
  such poles are at $z_i=1/\alpha_j$. Indeed the substitution
  $\beta_i=1$ makes the exponent of $(1-z_i)$ in the formula equal to
  $\lambda_i-i+r $, which is nonnegative.

  The only nonzero finite residues hence correspond to permutations
  $\sigma \in S_r$: $z_i=1/\alpha_{\sigma(i)}$. Straightforward calculation shows that the
  $-\Res_{z_i=1/\alpha_{\sigma(i)}}$ operation yields the term
  corresponding to $\sigma\in S_r$ in Proposition~\ref{prop:many_vars}. This proves the theorem.
\end{proof}

\subsection{Consequences of the $g=G$ theorem.}\label{sec:ResConsequences}
\gr polynomials have a rich algebraic structure and they display
beautiful finiteness and alternating-sign properties. We believe that
the residue form for the stable \gr polynomials above sheds light on
many of those properties. We will illustrate this in Section
\ref{sec:A2} in a so-far unexplored situation---the Thom polynomials
of singularities. Here we will just sketch a simple example showing
how the multiplication structure of \gr polynomials is encoded in
their residue form.

\subsection{Multiplication}
Consider the concrete example of calculating the $g$-expansion of the
product $g_2 \cdot g_2$ (here ``2'' in the subscript is a length 1
partition). We have
$$g_2\cdot g_2=\Res_{z=0,\infty} (1-z)M(z) \cdot \Res_{u=0,\infty} (1-u)M(u)=\Res_{z,u=0,\infty}
(1-z)(1-u)M(z,u)=$$
$$\Res_{z,u=0,\infty} \left( (1-z)(1-u)\frac{1}{1-\frac{u}{z}} \left(1-\frac{u}{z}\right)M(z,u)\right)=$$
$$\Res_{z,u=0,\infty} \left( (1-z)(1-u) \left( \sum_{i=0}^2 \frac{(1-z)^i}{(1-u)^{i+1}} -
    \sum_{i=1}^2 \frac{(1-z)^{i}}{(1-u)^{i}} +
    \frac{u(1-z)^3}{(z-u)(1-u)^3} \right)
  \left(1-\frac{u}{z}\right)M(z,u)\right).
$$
The term involving ${u(1-z)^3}/{((z-u)(1-u)^3)}$ has $u$-residue 0, because of
Lemma \ref{lem:residue0}.
Hence we further obtain
$$g_2\cdot g_2=\Res_{z,u=0,\infty} \left(\left( \sum_{i=0}^2 \frac{(1-z)^{i+1}}{(1-u)^i} - \sum_{i=1}^2 \frac{(1-z)^{i+1}}{(1-u)^{i-1}}\right)\left(1-\frac{u}{z}\right) M(z,u) \right)$$
$$=g_{2,2}+g_{3,1}+g_{4,0}-g_{3,2}-g_{4,1}.$$

\smallskip

In general the calculation of products of arbitrary \gr polynomials is
similar, see \cite{AR}. Namely, to find an explicit expression for $g_I \cdot g_J$ as sums of \gr
polynomials, one considers
$$\prod_i (1-z_i)^{I_i-i} \prod_j (1-u)^{J_j-j} \prod_{i,j} \frac{1}{1-\frac{u_j}{z_i}},$$
and replaces $1/(1-u_j/z_i)$ with an appropriate initial sum of its
Laurent series at $z_i=u_j=1$. This should be done in such a way that
the remainder multiplied by
$\prod (1-z_i)^{I_i-i} \prod (1-u)^{J_j-j}$ has 0 residue.

\begin{remark}
  The consideration above shows that the product of two \gr
  polynomials (parametrized by integer sequences) is a {\em finite
    sum} of \gr polynomials parametrized by integer sequences with
  coefficients with {\em alternating signs}. Proving the analogous
  statement for \gr polynomials parametrized by {\em partitions}
  needs extra considerations (cf. \cite{AR}). We will perform a
  similar analysis for Thom polynomials in Section \ref{sec:pos}.
\end{remark}

\section{Fundamental class in cohomology and K-theory} \label{sec:fundclass}

\subsection{The cohomology fundamental class}
Let $X$ be a subvariety of codimension $d$ in a smooth projective
variety $M$. Then $X$ has a well-defined fundamental class
$[X]\in H^{2d}(M,\Q)$, satisfying
\begin{equation}
  \label{eq:pdual}
  \int_X\iota^*\omega=\int_M[X]\cdot\omega,
\end{equation}
where $\iota:X\to M$ is
the embedding, and $\omega\in H^*(M,\Q)$ is arbitrary, cf. \cite{GH}.

There is a natural extension of this notion to the equivariant
setting, which plays a fundamental role in enumerative geometry.  Let
$V$ be a complex vector space acted upon by a complex torus $T$. Then
a $T$-invariant affine subvariety $X$ has a fundamental class
$[X]_T\in H^{2d}_T(V)= H^{2d}_T(\mathrm{pt})$, $d=\mathrm{codim}(X)$, which satisfies the
equivariant version of \eqref{eq:pdual}:
\[
  \int_X\iota^*\omega=\int_V[X]_T\cdot\omega,
\]
where $\omega$ is any equivariantly closed, compactly supported form on $V$.

There is a number of
definitions of this notion (cf. \cite[\S3]{bsz} for a discussion);
below we recall one  due to Joseph \cite{joseph}.
We begin with introducing some necessary notation.

\begin{itemize}
\item Let $\exp:\Lie(T)\to T$ be the exponential map; the pull-back of
  a function from $f:T\to \C$ to $\Lie(T)$ via this map will be denoted
  by $\exp^*f$.
\item For a character $\alpha\in\Hom(T,\C^*)$, we will
write $\bar\alpha$ for the corresponding weight in the weight lattice
$\Wl_T\subset\Ltd$. We will thus have the following equality of
functions on $\Lie(T)$:
\[       \exp^* \alpha = e^{\bar\alpha},
\]
where factor of $2\pi i$ is considered to be absorbed in the
definition of the exponential, and will be ignored in what follows.
\item Fix a $\Z$-basis $\beta_1,\ldots,\beta_r:T\to\C^*$ of
  $\Hom(T,\C^*)$. We then have
\[    H_T^*(V)=H_T^*(\mathrm{pt})=\Z[\bar{\beta}_1,\ldots,\bar{\beta}_r].
\]
\item Let $x_j$, $j=1,\dots N$ be a set of coordinates on $V$,
  corresponding to a basis of eigenvectors of the $T$ action, and
  denote by $\eta_j\in \Hom(T,\C^*)$, $j=1,\dots N$, the corresponding
  characters: for $t\in T$, we have $ t\cdot x_j = \eta_j(t)^{-1} x_j$. For
  what follows, it is convenient to make the following
\begin{assumption}\label{assump}
  All the weight vectors of the vector space $V$ lie in an open half-space
  of the weight lattice $\Wl_T\subset\Ltd$, i.e. there exists an element $Z\in
\Lie(T)$ such that we have
\[        \langle \bar\eta_j,Z\rangle>0, \quad j=1,\dots N.
\]
\end{assumption}
One can carry out the constructions of the theory without this
assumption as well, but this is more technical, and this case is
sufficient for our purposes.
\end{itemize}

Recall that for a finite-dimensional representation $W$ of $T$ with a diagonal
basis
\[    W=\oplus_{i=1}^m\C w_i,\; t\cdot w_i=\alpha_i(t)\cdot w_i, \text{ we have }
\tr \left[t\,|\,W\right]=\sum_{i=1}^m\alpha_i,\text{ for } t\in T.
\]
This function on $T$ is called the {\em character of } $W$.

Now let $X\subset V$ be a $T$-invariant subvariety, and denote by $RX$ the
ring of algebraic functions on $X$. The character
\[  \chi_X(t) = \tr [t\,|\,RX],\quad t\in T
\]
of $RX$ considered as a $T$-representation is only a formal series
since $RX$ is infinite-dimensional whenever the dimension of $X$ is
positive. Under Assumption \ref{assump}, however, this series
converges in a domain in $T$, and $ \chi_X(t) $ makes sense as a
rational function on $T$.

For example, $RV=\C[x_1,\dots,x_N]$ is the
ring of polynomial functions on $V$, and we have
\begin{equation}
  \label{eq:rv}
  \chi_{V} = \prod_{j=1}^N\frac1{1-\eta^{-1}_j},
\end{equation}
as can be seen by expanding this function in an appropriate domain in $T$.

The following theorem is a consequence of the Hilbert's syzygy
theorem (cf. also \cite[\S4.3]{ms}).

\begin{theorem}
  \label{hilbertsyzygy} Let $X\subset V$ be a $T$-invariant subvariety
  of codimension $d$. Then $\chi_{X}$ is a function on $T$ defined
  whenever $\chi_V$ is defined (cf.
  \eqref{eq:rv}), and has the form of a finite integral linear combination
of $T$-characters multiplied by $\chi_{V}$:
  \begin{equation}
    \label{eq:chirx}
  \chi_{X} = \chi_{V}\cdot \sum_{j=1}^M a_j \theta_j,\,\text{ where
} a_j\in\Z,\,\theta_j\in \Hom(T,\C^*).
  \end{equation}
  Moreover, expanding the function $\exp^*(\chi_X/\chi_V)=
  \sum_{j=1}^M a_j \bar\theta_j$ on $\mathrm{Lie}(T)$
  around the origin, we obtain a power series with
  lowest degree terms in degree $d$:
\begin{equation}
  \label{eq:expleading}
   \sum_{j=1}^M a_j \exp\bar\theta_j = \frac{1}{d!}\sum_{j=1}^M a_j
\bar\theta^d_j +\rho_{d+1}\;\text{ with }\rho_{d+1}\in\mathfrak{m}^{d+1},
\end{equation}
where $\mathfrak{m}$ is the maximal ideal of analytic functions vanishing at
the origin in $\Lie(T)$.
\end{theorem}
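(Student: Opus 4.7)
The plan is to build the proof on the $T$-equivariant form of Hilbert's syzygy theorem. Viewing $RX = RV/I(X)$ as a finitely generated graded $T$-equivariant module over the polynomial ring $RV = \C[x_1,\ldots,x_N]$, which has finite global dimension, we obtain a finite $T$-equivariant graded free resolution
$$ 0 \to F_k \to F_{k-1} \to \cdots \to F_0 \to RX \to 0, \qquad F_i = \bigoplus_{j} RV\otimes \C_{\theta_{i,j}}, $$
with each summand twisted by a character $\theta_{i,j}\in\Hom(T,\C^*)$. Using $\chi_{RV\otimes\C_\theta}=\chi_V\cdot\theta$ and taking the alternating sum of characters over the resolution yields formula \eqref{eq:chirx}, with coefficients $a_j$ the signed counts of generators carrying each character $\theta_j$. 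The right-hand side is defined exactly where $\chi_V$ is, because it is $\chi_V$ times a finite $\Z$-linear combination of characters.

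For the leading term, set $Q(t)=\sum_j a_j\theta_j$ and expand each $\exp^*\theta_j = e^{\bar\theta_j}$ as a Taylor series at the origin of $\Lie(T)$, giving
$$ \exp^*Q \;=\; \sum_{k\ge 0} \frac{\sigma_k}{k!}, \qquad \sigma_k \;=\; \sum_j a_j\bar\theta_j^{\,k}. $$
The remaining task is to prove $\sigma_k=0$ for $0\le k<d$ and to identify $\sigma_d/d!$ as the classical equivariant fundamental class of $X$. I would do this by a $T$-equivariant flat degeneration of $X$ to a cycle $\sum_\alpha m_\alpha[L_\alpha]$ of coordinate subspaces $L_\alpha=\{x_j=0:j\in S_\alpha\}$ of codimension $d$; Assumption \ref{assump} supplies a generic weight vector in the open half-space that serves as the Gröbner weight. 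Flatness preserves the class $Q$ in the Grothendieck group of $T$-representations, and for each $L_\alpha$ the Koszul complex yields $Q_{L_\alpha}=\prod_{j\in S_\alpha}(1-\eta_j^{-1})$, so
$$ \exp^* Q_{L_\alpha} \;=\; \prod_{j\in S_\alpha}(1-e^{-\bar\eta_j}) \;=\; \prod_{j\in S_\alpha}\bar\eta_j \;+\; \rho_{\alpha,d+1}, \qquad \rho_{\alpha,d+1}\in\m^{d+1}. $$
Summing over $\alpha$ with multiplicities simultaneously gives $\sigma_k=0$ for $k<d$ and the explicit formula $\tfrac{1}{d!}\sigma_d=\sum_\alpha m_\alpha\prod_{j\in S_\alpha}\bar\eta_j$.

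The main obstacle I anticipate is producing and controlling this equivariant flat degeneration and tracking the K-class through it; the underlying Gröbner theory is standard, but the $T$-equivariant bookkeeping needs care. A cleaner alternative that sidesteps the degeneration is to restrict everything to a generic 1-parameter subgroup $\lambda:\C^*\to T$ with positive weights $w_1,\ldots,w_N$ on the coordinates: then $\chi_V$ becomes $\prod(1-q^{w_j})^{-1}$, $\chi_X$ becomes the Hilbert series of $RX$, and standard commutative algebra gives $\chi_X = P(q)/\prod(1-q^{w_j})$ with $P(q)$ divisible by $(1-q)^d$ but not by $(1-q)^{d+1}$, because $\dim RX = N-d$ and the Hilbert--Samuel multiplicity is positive. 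Varying $\lambda$ over an open subset of $\Lie(T)$ propagates the order-$d$ vanishing of $\exp^*Q$ to all of $\Lie(T)$, yielding $\sigma_k=0$ for $k<d$ together with the non-vanishing of $\sigma_d$.
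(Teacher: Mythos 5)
The paper itself gives no proof of this theorem: it is stated as a consequence of Hilbert's syzygy theorem with a pointer to Miller--Sturmfels, so you were filling in a genuine gap, and the first half of your proposal does so along exactly the intended lines. A finite $T$-equivariant (character-graded) free resolution of $RX$ over $RV$ exists because the weights generate a pointed cone (Assumption \ref{assump}), and taking alternating sums of characters of the free terms yields \eqref{eq:chirx}; the domain-of-definition claim is then immediate since the right-hand side is $\chi_V$ times a finite Laurent combination of characters.

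For the leading-term statement \eqref{eq:expleading}, your two routes are not equally complete. The Gr\"obner degeneration sketch overstates what flatness buys: the initial degeneration preserves $\chi_X$, but the resulting monomial scheme is not a cycle of codimension-$d$ coordinate subspaces, and its class equals $\sum_\alpha m_\alpha\prod_{j\in S_\alpha}\bigl(1-\eta_j^{-1}\bigr)$ only modulo classes of modules supported in codimension strictly greater than $d$. To discard those correction terms you need precisely the assertion being proved, applied to smaller supports, i.e.\ an induction via prime filtrations --- this is how the multidegree/K-polynomial theory in Miller--Sturmfels proceeds --- so as written that route has a gap (which you did flag). Your second route, however, is correct and self-contained, and is genuinely different from the cited machinery: pick an integral $w$ in the open cone of Assumption \ref{assump}; then $\chi_X$ restricted to the one-parameter subgroup through $w$ is the Hilbert series of $RX$ for a positive grading with $\deg x_j=\langle\bar\eta_j,w\rangle>0$, whose pole order at $q=1$ is $\dim RX=N-d$ with positive leading coefficient, so the numerator vanishes at $q=1$ to order exactly $d$. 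After $q=e^{-s}$ this numerator is the restriction of $\exp^*(\chi_X/\chi_V)=\sum_j a_j e^{\bar\theta_j}$ to the ray through $w$, and since the degree-$k$ coefficient $\frac{1}{k!}\sum_j a_j\langle\bar\theta_j,w\rangle^k$ is a homogeneous polynomial in $w$ vanishing on a dense subset of an open cone, it vanishes identically for $k<d$, while the degree-$d$ term is nonzero on the cone. This argument is more elementary than the degeneration/multidegree approach and makes transparent exactly where Assumption \ref{assump} is used.
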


The last part of the theorem states that, after the expansion, the
terms up to degree $d-1$ cancel.
\begin{definition}
  Let $X\subset V$ be a $T$-invariant subvariety of codimension $d$. We
  define the {\em $T$-equivariant fundamental class}  of $X$ in $V$
  as the degree-$d$ (leading) term
  on the right hand side of \eqref{eq:expleading} interpreted as an
  element of $H^*_T(V)$:
\[      [X]_T = (-1)^d\sum_{j=1}^M a_j \bar\theta^d_j.
\]
\end{definition}

\begin{example}\label{exaffine} \rm
  Let $V=\C^2$ be endowed with a diagonal action of $T=\C^*$ with
  weight 1 on each of the two coordinate functions $x$ and $y$, and
  let $X =\{xy=0\}$. Then $X$ is $T$-invariant, and there is a short
  exact sequence of  $RV$-modules
\[    0\to   RV[2]\to RV\to RX\to 0,
\]
where $RV[2]$ stands for the free module of rank 1, generated by a
single element of degree 2, whose image is the function $xy$.  This
implies
\[ \chi_{V}=\frac1{(1-\beta^{-1})^2},\quad\text{and}\quad
\chi_{X}=\frac{1-\beta^{-2}}{(1-\beta^{-1})^2}=\frac{1+\beta^{-1}}{1-\beta^{-1}}.
\]
Now we substitute $\beta=e^{\bar{\beta}}$, and we see that modulo ${\bar{\beta}}^3$, we
have $\chi_{X}/\chi_{V} = 1-\beta^{-2} = 2\bar{\beta}$, and hence $[X]_T=2\bar{\beta}$.
\end{example}

\subsection{Equivariant K-theoretic fundamental classes}
It is not immediately obvious what one should take as the appropriate
definition of the equivariant fundamental class in $K$-theory.

In our setup, we have
\[ K_T(\mathrm{pt})=\Z\Hom(T,\C^*)=\Z[\beta_1^{\pm 1}, \beta_2^{\pm 1},\dots,\beta_r^{\pm 1}],
\]
and thus for a $T$-invariant $X\subset V$, it would seem natural
to define as this fundamental class the linear combination of torus characters
$\chi_{X}/\chi_{V}$ in \eqref{eq:chirx}, which naturally lies in
this space.\footnote{This polynomial is called the $K$-polynomial in
  \cite{ms} for this reason.}
This invariant is very difficult to calculate, however
(cf. \cite{kolokol} for a more detailed discussion), and, in fact,
there are some alternatives.

\begin{proposition} \label{proplead}
 Let $X\subset V$ be a $T$-invariant subvariety in the vector space
 $V$ satisfying Assumption \ref{assump}. Then the cohomology groups of the structure sheaf
 $H^i(Y,\calO_Y)$ for a  smooth $T$-equivariant resolution
 $\pi:Y\to X$ are independent of the choice of $Y$, and thus are
 invariants of $X$. In particular,
 \begin{equation}
\label{eq:pushchar}
\chik_X(\tau)\overset{\mathrm{def}}{=} \sum_{i=0}^{\dim Y}(-1)^i\tr
\left[\tau\,|\,H^i(Y,\calO_Y)\right]
\end{equation}
is an invariant of $X$, which coincides with $\chi_{X}$ if $X$ has
only rational singularities. Moreover, $\chi_{X}/\chi_{V}$ and
$\chik_{X}/\chi_{V}$ have the same leading term in the sense of
\eqref{eq:chirx} and \eqref{eq:expleading} in Theorem \ref{hilbertsyzygy}.
\end{proposition}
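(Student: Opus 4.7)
The plan is to dispatch the claims in the proposition in order, relying on standard birational geometry and an extension of Theorem~\ref{hilbertsyzygy} to coherent sheaves.

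For the first two claims (invariance under the choice of resolution and the rational singularities case), given two smooth $T$-equivariant resolutions $\pi_i:Y_i\to X$, I would construct a third, $\rho:Z\to X$, dominating both $\pi_i$, by taking the closure of the graph of the induced birational map between $Y_1$ and $Y_2$ and applying an equivariant form of Hironaka's desingularization theorem. For each resulting birational morphism $f:Z\to Y_i$ between smooth varieties, I would invoke the classical identity $Rf_*\calO_Z=\calO_{Y_i}$: the $R^0$ part follows from Zariski's Main Theorem, while the higher direct images vanish because smooth varieties have rational singularities (a consequence of Grauert--Riemenschneider in characteristic zero). The equivariant Leray spectral sequence then yields $H^\bullet(Z,\calO_Z)\cong H^\bullet(Y_i,\calO_{Y_i})$ as $T$-modules, giving independence from the choice of resolution and the well-definedness of $\chik_X$. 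For the rational singularities clause, $R\pi_*\calO_Y=\calO_X$ by definition, and since $X$ is affine the Leray spectral sequence gives $H^i(Y,\calO_Y)=H^0(X,R^i\pi_*\calO_Y)=0$ for $i>0$ and $H^0(Y,\calO_Y)=RX$, producing $\chik_X=\chi_X$.

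For the leading-term comparison, I would re-express both invariants as sums of Euler characteristics of coherent sheaves on $V$. Let $p=\iota\circ\pi:Y\to V$; since $V$ is affine, $H^i(Y,\calO_Y)=H^0(V,R^ip_*\calO_Y)$, so, setting $\F_i=R^ip_*\calO_Y$, we have
\[
\chik_X-\chi_X=\sum_{i\ge 1}(-1)^i\chi_{\F_i}+\bigl(\chi_{\F_0}-\chi_{\iota_*\calO_X}\bigr).
\]
The essential input I would establish is a sheaf-level extension of Theorem~\ref{hilbertsyzygy}: any $T$-equivariant coherent sheaf $\F$ on $V$ admits a finite $T$-equivariant free resolution by Hilbert's syzygy theorem (as $V$ is smooth affine), and the resulting expression $\chi_\F/\chi_V$ has expansion at the identity whose vanishing order equals $\codim(\mathrm{supp}\,\F)$. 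To finish, I would verify codimension estimates: for $i\ge 1$, $\F_i$ is supported on the image of the exceptional locus of $\pi$, which lies in the singular locus of $X$, of codimension at least $d+1$ in $V$; and $p_*\calO_Y=\iota_*\pi_*\calO_Y=\iota_*\calO_{X^{\mathrm{nor}}}$ by Zariski's Main Theorem, so the difference $\F_0-\iota_*\calO_X$ is supported on the non-normal locus of $X$, again of codimension at least $d+1$ in $V$.

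The main obstacle I anticipate is establishing this sheaf-theoretic extension of Theorem~\ref{hilbertsyzygy}, which is the technical heart of the leading-term argument: one must carefully relate a finite $T$-equivariant free resolution of $\F$ to the expansion of $\chi_\F/\chi_V$ at the identity and show that its order of vanishing exactly matches the codimension of the support. Once this is in place, the codimension estimates above immediately yield that $(\chik_X-\chi_X)/\chi_V$ has expansion starting in degree strictly greater than $d$, so $\chi_X/\chi_V$ and $\chik_X/\chi_V$ share the same degree-$d$ leading term $[X]_T$ in the sense of~(\ref{eq:expleading}).
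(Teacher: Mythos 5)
Your proposal follows essentially the same route as the paper's (sketched) proof: reduce to a birational morphism between smooth varieties via a dominating equivariant resolution and use $Rf_*\calO_Z=\calO_{Y_i}$ there, treat the rational-singularities clause as a restatement of the definition for affine $X$, and compare leading terms by bounding the codimension of the supports of the correction sheaves and invoking the sheaf-level analogue of Theorem \ref{hilbertsyzygy} (the Miller--Sturmfels K-polynomial degree bound, which the paper simply cites); your explicit handling of the $i=0$ term through the normalization is a slightly more detailed version of what the paper absorbs into its flat base change remark. One small repair: for $i\ge 1$ the support claim for $R^ip_*\calO_Y$ should be justified, as in the paper, by vanishing of the higher direct images over the smooth locus of $X$ (flat base change plus rationality of smooth points), not by asserting that the image of the exceptional locus lies in the singular locus of $X$ --- that assertion can fail for a non-economical resolution, though the needed conclusion $\codim\ge d+1$ is unaffected.
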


These statements are fairly standard---see for example
\cite{ms,Hsh}---hence we only give a {\em sketch of the proof} to
emphasize the key ideas involved. First we recall that for two smooth
resolutions $Y_1\to X\leftarrow Y_2$, there exists a resolution
$Y\to X$ which dominates $Y_1,Y_2$. This fact reduces the theorem to
the case when both $X$ and $Y$ are smooth and $\pi$ is birational. In
this case, the first statement may be found in \cite[Chapter III]{Hsh}.

The  statement on rational singularities is essentially a tautology: for an affine
variety $X$, having rational singularities means precisely that for
any smooth resolution $Y\to X$, we have
$H^0(Y,\calO_Y)=H^0(X,\calO_X)$ and $H^i(Y,\calO_Y)=0$ for $i>0$.

Finally, note that the cohomology groups $H^i(Y,\calO_Y)$ are the
sections over $X$ of the derived push-forward sheaves
$R^i\pi_*\calO_Y$. Applying the flat base change for the smooth locus
in $X$, we see that for $i>1$, these sheaves are supported on the
singular locus of $X$, which is of higher codimension than $X$
itself. For such a sheaf then, the corresponding leading term will be
of higher degree than $d$, the codimension of $X$ (see \cite{ms}), and this
completes the proof. \qed

\begin{definition} \label{def-Kclass}
  Let $X$ be a $T$-invariant subvariety of the vector space $V$
  endowed with a $T$-action and satisfying Assumption
  \ref{assump}. Then we define the $K$-theoretic fundamental class $[X]^K_T$ of
  $X$ in $V$ as the character $\chik_X/\chi_V$, where $\chik_X$ is
  given by the formula \eqref{eq:pushchar}.
\end{definition}

Now let us revisit Example \ref{exaffine}.
Denote by $Y$ the normalization of   $X$, which is the union of two nonintersecting lines. Then $H^0(Y,\calO_Y)$ is two copies of a polynomial ring in one variable, and $H^0(X,\calO_X)\subset H^0(Y,\calO_Y)$ is the subset of those pairs of polynomials whose constant terms coincide. We have
\[
\chik_X=\chi_Y=\frac{2}{1-\beta^{-1}}, \quad
\chi_V=\frac{1}{(1-\beta^{-1})^2}, \quad
\text{and hence} \quad
[X]_T^K=\frac{\chik_X}{\chi_V}=2(1-\beta^{-1}).
\]

 It is instructive to verify directly the last statement of
  Proposition \ref{proplead} even in this simple case. When we used
  $\chi_X$ instead of $\chik_X$, we obtained a different
  answer:
\[
\frac{\chi_X}{\chi_V}=\frac{(1+\beta^{-1})/(1-\beta^{-1})}{1/(1-\beta^{-1})^2}=1-\beta^{-2}.
\]
Yet, after substituting $\beta=e^{\bar{\beta}}$, we see that, modulo $({\bar{\beta}}^3)$ 
we have the equality: \[
\chik_{X}/\chi_{V} = \chi_{X}/\chi_{V} = 2\bar{\beta}\quad \mod ({\bar{\beta}}^3), \] 
 recovering the cohomological fundamental class of Example \ref{exaffine}.

\begin{remark}
\rm For a
  holomorphic map between complex manifolds $g:M^a\to P^b$, one can
  consider the ${\eta}$-singularity points
\[
{\eta}(g)=\{ x\in M : \text{the $N$-jet of $g$ at $x$ belongs to ${\eta}$}\}.
\]
Thom's principle on cohomological Thom polynomials states that if $g$
satisfies certain transversality properties then
\[
[{\eta}(g)]=\Tp^{a\to b}_\eta(\text{Chern roots of }TM,\text{Chern roots of }g^*(TP)).
\]
This powerful statement relies on the fact that the notion
of ``cohomological fundamental class'' is {\em consistent with pullback morphisms}. 
The way we set up the notion of K-theoretic fundamental
class in Definition \ref{def-Kclass} is {\em not} consistent with pullback morphisms (rather, it is
consistent with push-forward morphisms), hence Thom's principle does
not hold for our K-theoretic Thom polynomials. The interesting project
of studying another version of K-theoretic fundamental class of
singularities---one for which Thom's principle holds---is started in
\cite{kolokol}.
\end{remark}

We end this section with an observation addressing the situation when the
group $G$ acting on $V$ is a general reductive group with maximal torus
$T$.  For a reductive group $G$, we have $K_G(\pt)=K_T(\pt)^W$
(the Weyl-invariant part). For a $G$-invariant $X\subset V$, the class $[X]_T^K$
will be in this Weyl-invariant part, and hence we can define
$[X]_G^K = [X]_T^K$.

\smallskip

In the rest of the paper, if the group that acts is obvious, we will
drop the subscript and use the notation $[X]=[X]_G$, $[X]^K=[X]_G^K$
for the cohomological and K-theoretic fundamental class.

\section{Singularities and their Thom polynomials}

 Recall the notion of contact singularities and their Thom polynomials
 from \S\ref{sec:introktp}. 
Let us see a few examples.

\begin{example} \rm
\
\begin{itemize}
\item The simplest case is $Q=\C$, also known as the
  $\A_0$-algebra. In this case, we have
  \[
\eta_{\A_0}^{a\to b}=J_N(\Ce,\Cb),
\] which is essentially the
  inverse function theorem.
\item When the algebra $Q$ is $\A_1=\C[x]/(x^2)$, the set
  $\eta_{\A_1}^{a\to b}$ is the set of singular map-jets, i.e.
  those whose derivative at 0 is not injective.
\item For $r>0$, consider
  $Q=\C[x_1,\ldots,x_r]/(x_1,\ldots,x_r)^2$. In this case,
  $\eta_{Q}^{a\to b}$ is the set of those map-jets whose linear part
  has corank at least $r$ (also known as the $\Sigma^r$ singularity).
\item The contact singularities corresponding to the algebra $Q=\A_r=\C[x]/(x^{r+1})$
 are called {\em Morin singularities}. A generic element of
  $\eta_{\A_2}^{2\to2}$ may be represented as $(x,y)\mapsto (x^3+xy, y)$; it is called the
  cusp singularity.
 \end{itemize}
\end{example}

\subsection{The model}

By a {\em model} for a singularity $\eta \subset J(\Ce,\Cf)$, we mean
a $\GL(\Ce) \times \GL(\Cf)$-equivariant commutative diagram
\begin{equation*}
\xymatrix{
X \ar[dr]_{\pi} \ar[r]_{i\ \ \ \ \ \ \ } \ar@/^2pc/[rr]^\rho & M \times  J(\Ce,\Cf) \ar[d]_{\pi_1} \ar[r]_{\ \ \ \ \ \pi_2}&  J(\Ce,\Cf) \ar[d] \\
   & M  \ar[r]^{p_M}              & pt,
}
\end{equation*}
where
\begin{itemize}
\item $M$ is a smooth compact manifold,
\item  $\pi:X \to M$ is a subbundle of the trivial bundle $\pi_1: M
  \times  J(\Ce,\Cf) \to M$,
\item $\rho=\pi_2 \circ i$ is birational to ${\eta}$,
\item and $p_M$ is the map from $M$ to a point $pt$.
\end{itemize}
   
Let $\nu$ be the quotient bundle of $\pi_1: M\times  J(\Ce,\Cf) \to M$ by $X \to M$.
It follows that for such a model for the singularity $\eta$ one has
$$\Tp_{\eta} = p_{M*} (e(\nu)),$$
where $e$ stands for the (equivariant) Euler class. Indeed, we have
\begin{equation}\label{thearg}
\Tp_{\eta}=\rho_*(1)=\pi_{2*}( i_* (1))=\pi_{2*}( e(\nu) ) = p_{M*} (e(\nu)).
\end{equation}

The advantage of our definition of K-theoretic fundamental class in Section \ref{sec:fundclass} is that the argument (\ref{thearg}) goes through without change to the K-theoretic setting, and we have
$$\KTp_{\eta} = p_{M!}( e(\nu) ),$$
where $e$ is now the K-theoretic (equivariant) Euler class, and $p_{M!}$ is the K-theoretic push-forward map.

\subsection{Integration in K-theory using residues} 

In what follows we will use
residue calculus for the push-forward map in K-theory. 

Let the torus $T$ act on the smooth variety $X$ with finitely many
fixed points. Let $W$ be a rank-$d$ equivariant vector bundle over
$X$, and let $\omega_1,\ldots,\omega_w$ be its Chern roots (i.e.
virtual line bundles whose sum is $W$). Let $p: \Gr(r,W)\to X$ be the
Grass\-mannization of $W$, that is an equivariant bundle whose fiber
over $x\in X$ is the Grassmannian $\Gr(r,W_x)$ of dimension $r$ linear
subspaces of the fiber $W_x$ of $W$ over $x$.  Let $S$ be the
tautological subbundle over $\Gr(r,W)$, and let
$\sigma_1,\ldots,\sigma_r$ be its Chern
roots. 
A symmetric Laurent polynomial $g(\sigma_1,\ldots,\sigma_r)$ is hence
an element of $K_T(\Gr(r,W))$.

\begin{lemma} \label{lem:intres}
We have
\begin{equation}\label{eqn:intres}
p_! ( g(\sigma_1,\ldots,\sigma_r)) =
\mathop{\Res}_{z_1=0,\infty} \ldots
\mathop{\Res}_{z_r=0,\infty}
\left(
\prod_{i>j} \left( 1-\frac{z_i}{z_j}\right)
\frac{g(z_1,\ldots,z_r)}{\prod_{i=1}^r
\prod_{j=1}^w \left( 1- \frac{z_i}{\omega_j}\right)}
\prod_{i=1}^r\frac{dz_i}{z_i} \right).
\end{equation}
\end{lemma}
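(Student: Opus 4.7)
The plan is to factor $p : \Gr(r, W) \to X$ through the partial flag bundle $q : \mathrm{Fl}(1, 2, \ldots, r, W) \to X$ and then iteratively apply the K-theoretic push-forward formula for a projective bundle. First I would invoke the splitting principle to reduce to the case when $W$ splits $T$-equivariantly as a direct sum of line bundles with Chern roots $\omega_1, \ldots, \omega_w$; this is legitimate since both sides of \eqref{eqn:intres} are universal symmetric expressions in the $\omega_j$. Writing $L_1 \subset L_2 \subset \cdots \subset L_r$ for the tautological flag on $\mathrm{Fl}$ and $\tau_i$ for the Chern root of $L_i / L_{i-1}$, the projection $\pi : \mathrm{Fl} \to \Gr(r, W)$ satisfies $\pi^* g(\sigma_1, \ldots, \sigma_r) = g(\tau_1, \ldots, \tau_r)$ by the splitting principle applied to the $\GL_r / B$-fiber, and $\pi_!(\mathcal{O}_{\mathrm{Fl}}) = \mathcal{O}_{\Gr(r,W)}$ by Borel--Weil--Bott. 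Combined with the projection formula, this gives $p_!(g(\sigma)) = q_!(g(\tau_1, \ldots, \tau_r))$, reducing the statement to an iterated push-forward through projective bundles.

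The key ingredient is the $r = 1$ case of the lemma: for a projective bundle $\pi_1 : \mathbb{P}(V) \to Y$ whose fiber has rank $n$ with Chern roots $v_1, \ldots, v_n$, the K-theoretic push-forward of a Laurent polynomial $h(\tau)$ in the tautological Chern root satisfies
\[
\pi_{1!}(h(\tau)) \;=\; \sum_{k=1}^n \frac{h(v_k)}{\prod_{j \neq k}(1 - v_j/v_k)} \;=\; \Res_{z = 0, \infty} \frac{h(z)}{\prod_{j=1}^n (1 - z/v_j)}\, \frac{dz}{z},
\]
where the first equality is Atiyah--Bott K-theoretic localization on $\mathbb{P}(V)$ and the second follows from applying the one-variable residue theorem to the simple poles at $z = v_k$.

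Next I would factor $q$ as a tower of projective bundles
\[
\mathrm{Fl}(1, \ldots, r, W) \to \mathrm{Fl}(1, \ldots, r-1, W) \to \cdots \to \mathrm{Fl}(1, W) = \mathbb{P}(W) \to X,
\]
in which the $k$-th arrow is $\mathbb{P}(W/L_{k-1})$. From the K-theoretic identity $\prod_{j=1}^w(1 - z/\omega_j) = \prod_{i < k}(1 - z/\tau_i) \cdot \prod_v (1 - z/v)$, where $v$ ranges over the Chern roots of $W/L_{k-1}$, applying the projective bundle formula at the $k$-th step, that is, integrating out $\tau_k$ (renamed $z_k$), introduces exactly the factor $\prod_{i < k}(1 - z_k/z_i)$ into the numerator, together with the denominator $\prod_{j=1}^w(1 - z_k/\omega_j)$. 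Iterating from $k = r$ down to $k = 1$ accumulates the total numerator $\prod_{k=1}^r \prod_{i < k}(1 - z_k/z_i) = \prod_{i > j}(1 - z_i / z_j)$, which is precisely the Vandermonde-like factor in \eqref{eqn:intres}. The main obstacle is the bookkeeping through the tower: one must verify that the $\tau_i$ not yet integrated out continue to appear as the variables $z_i$ of the pending residues; fortunately, Remark \ref{rem:normalcross} guarantees that the order of the operations $\Res_{z_i = 0, \infty}$ is immaterial, so this reduces to a straightforward induction on $r$.
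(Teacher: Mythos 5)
Your route is genuinely different from the paper's. The paper proves the lemma in one stroke: for $X=\pt$ it writes the K-theoretic fixed-point formula on $\Gr(r,W)$, a sum over $r$-element subsets $I$ with denominators $\prod_{i\in I}\prod_{j\in\bar I}(1-\omega_i/\omega_j)$, and checks that applying the residue theorem in each variable to the right-hand side of \eqref{eqn:intres} reproduces exactly this sum; the general case then follows by restricting $W$ to the finitely many fixed points of $X$. You instead factor $p$ through the full flag bundle (using $\pi_!\calO_{\mathrm{Fl}}=\calO_{\Gr(r,W)}$ and the projection formula) and iterate the rank-one push-forward through a tower of projective bundles, the factor $\prod_{i>j}(1-z_i/z_j)$ arising from the K-theoretic identity $[W/L_{k-1}]=[W]-\sum_{i<k}[L_i/L_{i-1}]$. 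This buys an argument that does not really use the finitely-many-fixed-points hypothesis (it only needs the relative projective-bundle push-forward formula), at the price of more bookkeeping; note that the replacement of the not-yet-integrated $\tau_i$ by the variables $z_i$ is justified by linearity of the push-forward and the fact that each intermediate push-forward is a Laurent polynomial in the remaining roots, not by Remark \ref{rem:normalcross}, which only concerns the order of the residues.

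There is one concrete error in your key ingredient: the rank-one localization formula has the tangent weights inverted. At the fixed point $\ell_k\subset V$ the tangent space is $\Hom(\ell_k,V/\ell_k)$ with weights $v_j/v_k$, so the holomorphic Lefschetz denominator is $\prod_{j\neq k}(1-v_k/v_j)$, not $\prod_{j\neq k}(1-v_j/v_k)$; as written, your two displayed equalities are incompatible. For instance, with $\tau$ the class of the tautological subbundle, $h(\tau)=\tau$ and $\dim V=2$, your middle expression gives $v_1+v_2$, while both the true push-forward $\pi_{1!}(\calO(-1))=0$ and the residue expression give $0$. Since everything downstream uses only the residue form (which is correct), the slip is fixable, but the justification as stated is wrong. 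Also, invoking Atiyah--Bott localization on $\mathbb{P}(V)$ over an intermediate flag-bundle base is loose: either quote the relative projective-bundle push-forward formula directly, or, as the paper does, reduce to the fixed points of $X$, where $V$ splits into characters and fiberwise localization applies.
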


\begin{proof}
Consider first the special case when $X$ is a point. Then the equivariant localization formula for the push-forward map is
\[
p_!( f(\sigma_1,\ldots,\sigma_r) )=
\sum_{I}
\frac{ f(\omega_{I_1},\ldots,\omega_{I_r})}{\prod_{i\in I} \prod_{j\in \bar{I}} \left( 1-\frac{\omega_i}{\omega_j}\right)},
\]
where the summation is over $r$-element subsets $I$ of $\{1,\ldots,n\}$, and $\bar{I}$ is the complement of~$I$.
Applying the Residue Theorem for the right hand side of (\ref{eqn:intres}), for $z_1, z_2, \ldots$ gives the same expression. This proves the lemma when $X$ is a point.

The general case is shown applying this special case to $W$ restricted to fixed points.
\end{proof}

When $G$ is a connected algebraic group $G$, Lemma~\ref{lem:intres}
may be applied to the maximal torus $T\subset G$, and since $K_G(X)$ is
the Weyl-invariant part of $K_T(X)$, formula (\ref{eqn:intres}) holds
without change.

\section{$\Sigma^r$ singularities}

\subsection{The model for $\Sigma^r$}

The obvious model for the
\[
\Sigma^r=\Sigma^r(\Ce,\Cf)=\{g\in J^1(\Ce,\Cf): \dim \ker g \geq r\}
\]
singularity is $M=\Gr(r,\Ce)$, and
\[
X=\{(V,g)\in \Gr(r,\Ce)\times J^1(\Ce,\Cf): g|_V=0\}.
\]
Let the tautological rank $r$ bundle over $\Gr(r,\Ce)$ be $S$. The bundle $\pi: X \to \Gr(r,\Ce)$ can be identified with $J^1(\Ce/S,\Cf)$, hence the normal bundle is $\nu=J^1(S,\Cf)$. Thus $\KTp_{\Sigma^r}=p_!( e(J^1(S,\Cf) )) $ for the map $p:\Gr(r,\Ce)\to \pt$.

\begin{theorem}
We have
\begin{equation}\label{eqn:sigmar_res}
\KTp_{\Sigma^r}=
\mathop{\Res}_{z_1=0,\infty} \ldots
\mathop{\Res}_{z_r=0,\infty}
\left(
\prod_{i>j} \left( 1-\frac{z_i}{z_j}\right)
\prod_{i=1}^r
\frac{ \prod_{j=1}^b \left( 1- \frac{z_i}{\beta_j}\right)}
{\prod_{j=1}^a \left( 1- \frac{z_i}{\alpha_j}\right)}
\prod_i\frac{dz_i}{z_i} \right).
\end{equation}
\end{theorem}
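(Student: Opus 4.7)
The plan is to assemble the formula directly from the model description already given in the preceding subsection, by computing the K-theoretic Euler class of the normal bundle in terms of Chern roots and then applying Lemma \ref{lem:intres} to the Grassmannian push-forward.

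First I would unpack the geometry. The model is $M = \Gr(r,\Ce)$ with $X = \{(V,g) : g|_V = 0\} \to M$, and the projection $\pi_2 \circ i : X \to \Sigma^r$ is birational by construction. Since $J^1(V,W) = \Hom(V,W)$ for vector spaces, the subbundle $X \to M$ is identified with $J^1(\Ce/S, \Cb) = \Hom(\Ce/S, \Cb)$, and the normal bundle of $X$ in the trivial bundle $M \times J^1(\Ce,\Cb)$ is therefore
\[
\nu = J^1(S,\Cb) = S^\vee \otimes \Cb.
\]
By the argument following equation~\eqref{thearg}, which carries over verbatim to K-theory thanks to Definition \ref{def-Kclass}, we have $\KTp_{\Sigma^r} = p_!\bigl(e(\nu)\bigr)$ where $p: \Gr(r,\Ce) \to \pt$ and $e$ is the K-theoretic Euler class.

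Next I would compute $e(\nu)$ in terms of Chern roots. If $\sigma_1, \ldots, \sigma_r$ denote the K-theoretic Chern roots of the tautological subbundle $S$, and $\beta_1, \ldots, \beta_b$ are the K-theoretic Chern roots of the standard representation $\Cb$ of $\glb$, then the rank $rb$ bundle $S^\vee \otimes \Cb$ has Chern roots $\sigma_i^{-1}\beta_j$. Applying the identity $e(L) = 1 - L^\vee$ for a line bundle $L$ and multiplicativity of $e$ on direct sums yields
\[
e(\nu) = \prod_{i=1}^r \prod_{j=1}^b \left(1 - \frac{\sigma_i}{\beta_j}\right).
\]

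Finally I would invoke Lemma \ref{lem:intres} with base $X = \pt$, total bundle $W = \Ce$ (whose Chern roots are $\omega_j = \alpha_j$), rank $r$, and the symmetric Laurent polynomial $g(\sigma_1,\ldots,\sigma_r) = \prod_{i,j}(1-\sigma_i/\beta_j)$. Substituting $\sigma_i \mapsto z_i$ in the right-hand side of \eqref{eqn:intres} gives exactly \eqref{eqn:sigmar_res}, completing the proof.

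The argument is essentially a mechanical chaining together of (i) the model from the previous subsection, (ii) the K-theoretic version of \eqref{thearg}, and (iii) Lemma \ref{lem:intres}; there is no combinatorial or analytic obstacle. The only step that warrants care is bookkeeping of the duality conventions in the Chern-root expression for $e(S^\vee \otimes \Cb)$, so that the $\beta_j$'s appear in the numerator and the $\alpha_j$'s (via Lemma \ref{lem:intres}) in the denominator, rather than the reverse. Once that is settled, the formula drops out immediately.
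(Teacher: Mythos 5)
Your proposal is correct and follows essentially the same route as the paper: identify $\nu = J^1(S,\Cb)$ from the model, write $\KTp_{\Sigma^r} = p_!\bigl(e(\nu)\bigr) = p_!\bigl(\prod_{i,j}(1-\sigma_i/\beta_j)\bigr)$, and apply Lemma \ref{lem:intres} with the Chern roots of $\Ce$ playing the role of the $\omega_j$. You simply spell out the steps (normal bundle identification, Euler-class convention, inputs to the lemma) that the paper's two-line proof leaves implicit.
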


\begin{proof} We have
\[
\KTp_{\Sigma^r}=p_! (  e(J^1(S,\Cf)) )=
p_!
\left(  \prod_{i=1}^r \prod_{j=1}^b \left( 1-\frac{\sigma_i}{\beta_j} \right)\right),
\]
and applying Lemma \ref{lem:intres} proves the Theorem.
\end{proof}

Comparing expression (\ref{eqn:sigmar_res}) with the residue formula for \gr polynomials (Definition~\ref{def:g}), we obtain
$$\KTp_{\Sigma^r}=G_{(r+l)^r} (\ep_1^{-1},\ldots,\ep_a^{-1}; \beta_1^{-1},\ldots,\beta_b^{-1}).$$
This result is known in Schubert calculus \cite{LS} as the K-theoretic Giambelli-Thom-Porteous formula.

\section{$\A_2$ singularities} \label{sec:A2}

\subsection{The model for $\A_2$}
Consider the tautological exact sequence $S\to \Ce \to Q$ over
$\Gr(1,\Ce)$. Let $M=\Gr(1, S^{\otimes 2} \oplus Q)$ be the
projectivization of the vector bundle $S^{\otimes 2} \oplus Q$ over
$\Gr(1,\Ce)$, and denote the tautological line bundle over $M$ by $D$.

According to \cite{bsz, kaza:noass} there is a model for the
\[
\eta_{\A_2}^{a\to b}=\overline{\{g\in J^2(\Ce,\Cf): Q_g\cong \C[x]/(x^3)]\}}
\]
singularity with this $M$, and  normal bundle $\nu= \Hom(S\oplus D,\Cf)$.

\subsection{Residue formula for $\KTp_{\A_2}$}

\begin{theorem} \label{thm:A2R}
We have
\[
\KTp_{\A_2}^{a\to b}=
\mathop{\Res}_{z_1=0,\infty}
\mathop{\Res}_{z_2=0,\infty}
\left(
\frac{ 1 -\frac{z_2}{z_1} }{ 1-\frac{z_2}{z_1^2} }  \prod_{i=1}^2
\frac{ \prod_{j=1}^b \left( 1- \frac{z_i}{\beta_j}\right)}
{\prod_{j=1}^a \left( 1- \frac{z_i}{\alpha_j}\right)}
\frac{dz_2 dz_1}{z_2 z_1} \right).
\]
\end{theorem}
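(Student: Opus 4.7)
The plan is to apply Lemma~\ref{lem:intres} twice, for the two stages of the factorization $p = p_1 \circ p_2$, where $p_2 : M \to \Gr(1, \Ce)$ is the $\P^{a-1}$-bundle projection and $p_1 : \Gr(1, \Ce) \to \pt$. Writing $z_1$ and $z_2$ for the K-theoretic Chern roots of the tautological line bundles $S$ and $D$, respectively, and using $\nu = \Hom(S, \Cf) \oplus \Hom(D, \Cf)$, the K-theoretic Euler class factors as
\[
e(\nu) = \prod_{j=1}^b (1 - z_1/\beta_j) \cdot \prod_{j=1}^b (1 - z_2/\beta_j).
\]

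First I would apply Lemma~\ref{lem:intres} to $p_{2!}$. The total bundle is $W_2 = S^{\otimes 2} \oplus Q$ over $\Gr(1, \Ce)$, and its Chern polynomial is $(1 - z/z_1^2) \cdot P_Q(z)$. Using the relation $[Q] = [\Ce] - [S]$ in K-theory coming from the tautological sequence, we have the rational identity $P_Q(z) = \prod_{j=1}^a (1 - z/\alpha_j)/(1 - z/z_1)$. Since the $z_1$-dependent factor of $e(\nu)$ is constant along fibers of $p_2$ and pulls outside the fiberwise push-forward, the lemma yields
\[
p_{2!}(e(\nu)) = \prod_{j=1}^b(1 - z_1/\beta_j) \cdot \Res_{z_2=0,\infty} \frac{(1 - z_2/z_1) \prod_{j=1}^b(1 - z_2/\beta_j)}{(1 - z_2/z_1^2) \prod_{j=1}^a(1 - z_2/\alpha_j)} \frac{dz_2}{z_2}.
\]
A second application of Lemma~\ref{lem:intres} to $p_{1!}$, whose total bundle is $\Ce$ with Chern roots $\alpha_1,\dots,\alpha_a$, then contributes the outer residue $\Res_{z_1=0,\infty}$ together with the denominator $\prod_{j=1}^a (1 - z_1/\alpha_j)$ and the form $dz_1/z_1$, producing the stated double iterated residue.

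The main technical point is justifying the insertion of the rational expression for $P_Q(z)$ into Lemma~\ref{lem:intres} in place of a genuine product of Chern root factors. This is legitimate because the lemma depends on the total bundle only through its Chern polynomial, viewed as a symmetric function in the Chern roots: the numerator factor $(1 - z_2/z_1)$ introduced by the substitution exactly cancels the spurious pole at $z_2 = z_1$ created in the expanded denominator $\prod_j(1 - z_2/\alpha_j)$, so at each torus fixed point the cancellation reproduces the correct $\prod_k(1-z_2/q_k)$. For a fully rigorous argument one may first pull back to a complete flag bundle of $\Ce$, on which $\Ce$ splits as a sum of line bundles, apply Lemma~\ref{lem:intres} there with genuine Chern roots, and descend via $S_a$-Weyl invariance.
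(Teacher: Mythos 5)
Your proposal is correct and follows essentially the same route as the paper: two applications of Lemma~\ref{lem:intres} along the factorization through $\Gr(1,\Ce)$, with the tautological sequence $S\to\Ce\to Q$ used to trade the Chern roots of $Q$ for the $\alpha_j$'s at the cost of the numerator factor $1-z_2/z_1$. The only difference is ordering: the paper first applies the lemma with the (virtual) Chern roots $\omega_j$ of $Q$ and rewrites afterwards, which makes your extra justification of inserting the rational expression for the Chern polynomial unnecessary.
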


Note that the order of taking residues is important here: first we take residues with respect to $z_2$, then with respect to $z_1$.

\begin{proof} We know that $\KTp_{\A_2}=p_{M!} (  e(\Hom(D\oplus S,\Cf)) )$. Let the Chern roots of the bundle $Q$ be $\omega_1,\ldots,\omega_{a-1}$, and let the class of $S$ be $\sigma$, and the class of $D$ be $\tau$. We have
\[
e(\nu)=\prod_{j=1}^b \left( 1-\frac{\sigma}{\beta_j} \right)
\prod_{j=1}^b \left( 1-\frac{\tau}{\beta_j} \right).
\]
Pushing forward this class to $\Gr(1,\Ce)$, using Lemma \ref{lem:intres} we get
\[
\mathop{\Res}_{z_2=0,\infty} \left(
\frac{  \prod_j\left( 1-\frac{\sigma}{\beta_j} \right)\prod_j\left( 1-\frac{z_2}{\beta_j} \right)}
{\left( 1-\frac{z_2}{\sigma^2} \right)\prod_j \left( 1-\frac{z_2}{\omega_j} \right)}
\frac{dz_2}{z_2} \right).
\]
Using the fact that $S\to \Ce\to Q$ is an exact sequence, this is further equal to
\[
\mathop{\Res}_{z_2=0,\infty} \left(
\frac{  \prod_j\left( 1-\frac{\sigma}{\beta_j} \right)\prod_j\left( 1-\frac{z_2}{\beta_j} \right)\left( 1-\frac{z_2}{\sigma} \right)}
{\left( 1-\frac{z_2}{\sigma^2} \right)\prod_j\left( 1-\frac{z_2}{\alpha_j} \right)}
\frac{dz_2}{z_2}\right).
\]
Pushing this class further from $\Gr(1,\Ce)$ to a point, using Lemma \ref{lem:intres}, we obtain
\[
\mathop{\Res}_{z_1=0,\infty}
\mathop{\Res}_{z_2=0,\infty} \left(
\frac{  \prod_j\left( 1-\frac{z_1}{\beta_j} \right)\prod_j\left( 1-\frac{z_2}{\beta_j} \right)\left( 1-\frac{z_2}{z_1} \right)}
{\left( 1-\frac{z_2}{z_1^2} \right)\prod_j\left( 1-\frac{z_2}{\alpha_j} \right)\prod_j\left( 1-\frac{z_1}{\alpha_j} \right)}
\frac{dz_2}{z_2}\frac{dz_1}{z_1} \right),
\]
which is what we wanted to prove.
\end{proof}

\subsection{$\KTp_{\A_2}$ in terms of \gr polynomials---the stable expansion}\label{sec:expansions}


Let
\begin{equation*}\label{eqn:series1}
\frac{1}{1-z_2/z_1^2}=\sum_{r,s} d_{r,s} (1-z_1)^r(1-z_2)^s
\end{equation*}
be the Laurent expansion of the named rational function on the $|1-z_1| < |1-z_2|$ region. Equivalently, after substituting $x_1=1-z_1,x_2=1-z_2$, let
\begin{equation*}\label{eqn:series2}
\frac{1-2x_1+x_1^2}{x_2-2x_1+x_1^2}=\sum_{r,s} d_{r,s} x_1^rx_2^s
\end{equation*}
be the Laurent expansion of the named rational function on the $|x_1|< |x_2|$ region.
Based on the calculation
\begin{eqnarray}\label{eqn:g-calc}
\frac{1}{x_2-2x_1+x_1^2} & = & \frac{1}{x_2}\cdot\frac{1}{1-(2x_1-x_1^2)/x_2}=\sum_{k=1}^\infty \frac{1}{x_2^k}(2x_1-x_1^2)^{k-1}\\
\notag
& = & \sum_{k=1}^\infty \sum_{r=k-1}^{2k-2} (-1)^{r-k+1} 2^{2k-2-r} \binom{k-1}{2k-2-r} x_1^rx_2^{-k},
\end{eqnarray}
we have that 
\[
d_{r,s}=(-1)^{r+s+1}
\left(
2^{-2s-2-r}\binom{-s-1}{-2s-r-2}+
2^{-2s-r}\binom{-s-1}{-2s-r-1}+
2^{-2s-r}\binom{-s-1}{-2s-r}\right)
\]
for $r=0,1,\ldots, s=-r-1,\ldots,-\lfloor r/2 \rfloor$. In particular,
{\em the sign of $d_{r,s}$ is $(-1)^{r+s+1}$}.

For the values of $d_{r,s}$ for small (absolute value) $r,s$ see the table in \S\ref{sec:introktp}.

\begin{theorem}[Grothendieck expansion of $\KTp_{\A_2}$: the stable version]
\label{thm:expansion}
Let $l=b-a$, and $N>2l+2$. Then
\begin{equation} \label{eq:ktpst}
\KTp_{\A_2}^{a\to b}=
\sum_{r=0}^N
\sum_{s=-r-1}^{-\lfloor \frac{r}{2} \rfloor}
d_{r,s} G_{r+l+1,s+l+2}(\ep_1^{-1},\ldots,\ep_a^{-1}; \beta_1^{-1},\ldots,\beta^{-1}_b).
\end{equation}
\end{theorem}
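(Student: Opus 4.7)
The strategy is to compare the residue formula for $\KTp_{\A_2}^{a\to b}$ given by Theorem~\ref{thm:A2R} with the residue formula for $G$-polynomials coming from Definition~\ref{def:g} and Theorem~\ref{thm:G=g}. The two integrands differ only in a single rational factor: in the $G$-polynomial formula it is $(1-z_1)^{I_1-1}(1-z_2)^{I_2-2}$, while in the $\KTp$ formula it is $\frac{1}{1-z_2/z_1^2}$. Substituting the Laurent expansion of the latter in $(1-z_1),(1-z_2)$ converts each term into a $G$-polynomial summand.

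I would begin by specializing~\eqref{eqn:defg} to two residue variables, $a$ of the $\alpha$'s and $b$ of the $\beta$'s, with substitutions $\alpha_i\mapsto\ep_i^{-1}$, $\beta_i\mapsto\beta_i^{-1}$, and $I=(r+l+1,s+l+2)$ where $l=b-a$. The prefactor $\prod_j(1-z_j)^{I_j-j}$ combines with the $\prod_j(1-z_j)^{-(b-a)}=\prod_j(1-z_j)^{-l}$ factor in $M_{a,b}$ to simplify to $(1-z_1)^r(1-z_2)^s$, yielding
\[
G_{r+l+1,s+l+2}(\ep^{-1};\beta^{-1})=\Res_{z_1,z_2=0,\infty}\left[(1-z_1)^r(1-z_2)^s\left(1-\frac{z_2}{z_1}\right)\Phi(z_1,z_2)\frac{dz_1\,dz_2}{z_1 z_2}\right],
\]
where $\Phi(z_1,z_2)=\prod_{j=1}^2\prod_i(1-z_j/\beta_i)/\prod_i(1-z_j/\ep_i)$. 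Inserting the expansion $\frac{1}{1-z_2/z_1^2}=\sum_{r\geq 0,\,s}d_{r,s}(1-z_1)^r(1-z_2)^s$ into the $\KTp$ formula of Theorem~\ref{thm:A2R} and distributing the sum past the iterated residue formally gives
\[
\KTp_{\A_2}^{a\to b}=\sum_{r\geq 0}\sum_s d_{r,s}\,G_{r+l+1,s+l+2}(\ep^{-1};\beta^{-1}).
\]

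The main obstacle is to show that this sum in fact truncates at $r=N$ for any $N>2l+2$. I would combine three observations: \textbf{(a)}~the explicit formula for $d_{r,s}$ from~\eqref{eqn:g-calc} shows that $d_{r,-k}$ is nonzero only in the range $r\in[k-1,2k]$; in particular, for $r\geq 2l+3$ every $d_{r,s}$ with $s\in\{-1,-2,\ldots,-l-1\}$ vanishes, so in the tail the coefficient of each ``genuinely two-indexed'' $G_{r+l+1,j}$ with $j\in[1,l+1]$ is zero; \textbf{(b)}~by the straightening rule~\eqref{eqn:straightening2}, for $s\leq -l-2$ we have $G_{r+l+1,s+l+2}=G_{r+l+1}$; and \textbf{(c)}~evaluating the generating function $(1-x_1)^2/(x_2-2x_1+x_1^2)$ of the $d_{r,s}$ at $x_2=1$ gives $1$, so $\sum_s d_{r,s}=0$ for all $r\geq 1$. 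Combining (a), (b), (c), the coefficient of $G_{r+l+1}$ in the tail equals $\sum_{s\leq -l-2}d_{r,s}=\sum_s d_{r,s}=0$ for $r\geq 2l+3$, establishing the truncation. This also retroactively legitimizes the interchange of sum and iterated residue, since the sum is in fact finite once these tail cancellations are recognized.
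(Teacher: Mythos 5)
Your opening reduction is right, and it is the same as the paper's: matching the integrand of Theorem \ref{thm:A2R} against Definition \ref{def:g} with $I=(r+l+1,s+l+2)$, the $(1-z_j)^{I_j-j}$ prefactor combines with the $(1-z_j)^{-(b-a)}$ coming from $M_{a,b}$ to leave $(1-z_1)^r(1-z_2)^s$, so each Laurent coefficient $d_{r,s}$ does pair with $G_{r+l+1,s+l+2}(\ep^{-1};\beta^{-1})$. Your truncation mechanism (a)--(c) is also correct: for $r\ge 2l+3$ the surviving $d_{r,s}$ all have $s+l+2\le 0$, the straightening law \eqref{eqn:straightening2} collapses those $G$'s to $G_{r+l+1,0}$, and $\sum_s d_{r,s}=0$ for $r\ge 1$ (the paper gets this by setting $z_2=0$, i.e.\ $x_2=1$). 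This is in fact exactly the argument the paper uses in the proof of Theorem \ref{thm:expansion2} to bring $N$ down to $2l+2$.

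The genuine gap is the step where you ``distribute the sum past the iterated residue formally'' and then claim the tail cancellations ``retroactively legitimize'' the interchange. They do not. The infinite series of \gr polynomials does not converge (this is precisely Remark \ref{rem:lstab}), and, more to the point, what your (a)--(c) shows is that the \emph{termwise residues} of the tail (grouped by $r$) vanish; what is needed is that the \emph{residue of the tail of the series as a function} vanishes, i.e.\ that
$\Res_{z_1}\Res_{z_2}$ applied to $\bigl(\tfrac{1}{1-z_2/z_1^2}-\sum_{r=0}^{N}\sum_s d_{r,s}(1-z_1)^r(1-z_2)^s\bigr)$ times the rest of the integrand is zero. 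Since the Laurent expansion does not converge uniformly on the contours relevant to $\Res_{z=0,\infty}$ (near $z_1=0$ and $z_1=\infty$ one leaves the region $|1-z_1|<|1-z_2|$), termwise vanishing cannot be summed to reach this conclusion --- it is the same fallacy as integrating a pointwise but non-uniformly convergent series term by term. The missing ingredient is a remainder estimate: the paper writes the finite expansion \eqref{eqn:finiteseries} with the explicit closed-form remainder $R_N(z_1,z_2)$ of \eqref{eqn:remainder}, and observes that for $N$ large the form \eqref{eqn:maradek} satisfies the degree hypotheses of Lemma \ref{lem:residue0} in $z_2$, so already the inner residue $\Res_{z_2=0,\infty}$ annihilates it. Only after the identity is established for some large $N$ does your (a)--(c) argument legitimately reduce it to every $N>2l+2$; as written, your proposal never establishes the identity for any $N$.
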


Note that for a given $r$, the set of non-zero $d_{r,s}$ coefficients
are exactly those between $s=-r-1$ and $s=-\lfloor r/2 \rfloor$,
hence, in the summation above, $s$ runs through all its relevant values.

\begin{remark} \label{rem:lstab}
Since $N$ may be arbitrarily large in \eqref{eq:ktpst}, it is tempting to phrase Theorem \ref{thm:expansion} informally as
\begin{equation}\label{eqn:informal}
\KTp_{\A_2}^{a\to b}=
\sum_{r,s}
d_{r,s} G_{r+l+1,s+l+2}(\ep_1^{-1},\ldots,\ep_a^{-1}; \beta_1^{-1},\ldots,\beta^{-1}_b).
\end{equation}
This series does not converge, however.
\end{remark}

\begin{proof}
The finite expansion of $1/(1-z_2/z_1^2)$ with respect to $z_1$, around $z_1=1$, with remainder term is
\begin{equation}\label{eqn:finiteseries}
\frac{1}{1-z_2/z_1^2}=\sum_{r=0}^N \left( \sum_s  d_{r,s} (1-z_2)^s\right) (1-z_1)^r + R_N(z_1,z_2)
\end{equation}
where the $s$-summation is finite. A quick calculation shows that the
remainder term may be expressed as
\begin{equation}\label{eqn:remainder}
R_N(z_1,z_2)=-
\left( \frac{1-z_1}{1-z_2}\right)^{N+1}\frac{z_1q_N(z_2)+p_N(z_2)}{1-z_1^2/z_2}.
\end{equation}
where
\[
p_N(z)=\sum_{i=0}^{\lfloor \frac{N+1}{2} \rfloor} \binom{N+1}{2i}z^i,
\qquad
q_N(z)=\sum_{i=0}^{\lfloor \frac{N}{2} \rfloor} \binom{N+1}{2i+1}z^i.
\]
According to Theorem \ref{thm:A2R}, we have the following expression
for $\KTp_{\A_2}$:
\[
\KTp_{\A_2}^{a\to b}=
\mathop{\Res}_{z_1=0,\infty}
\mathop{\Res}_{z_2=0,\infty}
\left(
(1-z_1)^{l}(1-z_2)^{l}
\frac{1}{1-z_2/z_1^2}
\times \right.\hskip 5 true cm\]
\[ \left. \hskip 7 true cm \times
\left( 1 -\frac{z_2}{z_1} \right)  \prod_{i=1}^2
\frac{ \prod_{j=1}^b \left( 1- \frac{z_i}{\beta_j}\right)}
{\prod_{j=1}^a \left( 1- \frac{z_i}{\ep_j}\right) (1-z_i)^l}
\frac{dz_2 dz_1}{z_2 z_1} \right).
\]
Substituting (\ref{eqn:finiteseries}), we obtain
\[
\KTp_{\A_2}^{a\to b}=
\mathop{\Res}_{z_1=0,\infty}
\mathop{\Res}_{z_2=0,\infty}
\left(
\sum_{r=0}^N \left( \sum_s  d_{r,s} (1-z_2)^{s+l}\right) (1-z_1)^{r+l}
\times \right.\hskip 3.5 true cm\]
\[ \left. \hskip 5 true cm \times
\left( 1 -\frac{z_2}{z_1} \right)  \prod_{i=1}^2
\frac{ \prod_{j=1}^b \left( 1- \frac{z_i}{\beta_j}\right)}
{\prod_{j=1}^a \left( 1- \frac{z_i}{\ep_j}\right) (1-z_i)^l}
\frac{dz_2 dz_1}{z_2 z_1} \right)+
\]
\[
\mathop{\Res}_{z_1=0,\infty}
\mathop{\Res}_{z_2=0,\infty}
\left(
R_N(z_1,z_2)
\left( 1 -\frac{z_2}{z_1} \right)  \prod_{i=1}^2
\frac{ \prod_{j=1}^b \left( 1- \frac{z_i}{\beta_j}\right)}
{\prod_{j=1}^a \left( 1- \frac{z_i}{\ep_j}\right) }
\frac{dz_2 dz_1}{z_2 z_1} \right).
\]
According to the residue expression for \gr polynomials (Definition
\ref{def:g}) the first term equals
\[
\sum_{r=0}^N \sum_s d_{r,s}
G_{r+l+1,s+l+2}(\ep_1^{-1},\ldots,\ep_a^{-1},\beta_1^{-1},\ldots,\beta_b^{-1}),
\]
and we claim that the second term vanishes for large $N$. Indeed, using
the form (\ref{eqn:remainder}) of the remainder term $R_N(z_1,z_2)$,
we can see that for large $N$, the rational form
\begin{equation}\label{eqn:maradek}
R_N(z_1,z_2)\left( 1 -\frac{z_2}{z_1} \right)  \prod_{i=1}^2
\frac{ \prod_{j=1}^b \left( 1- \frac{z_i}{\beta_j}\right)}
{\prod_{j=1}^a \left( 1- \frac{z_i}{\ep_j}\right) }
\frac{dz_2 dz_1}{z_2 z_1}
\end{equation}
satisfies the conditions of Lemma \ref{lem:residue0} in $z_2$. This
means that already applying the first residue operation
$\Res_{z_2=0,\infty}$ 
results in 0. This completes the proof.
\end{proof}

\subsection{$\KTp_{\A_2}$ in terms of \gr polynomials -- the minimal expansion}\label{sec:min}

\begin{theorem}[\gr expansion of $\KTp_{\A_2}$, the minimal version]
\label{thm:expansion2}
We have the following expression for $\KTp_{\A_2}^{a\to b}$ in \gr
polynomials indexed by partitions:
\begin{equation*}
\KTp_{\A_2}^{a\to b}=
\sum_{r=0}^{2l+2}
\sum_{-l-2}^{-\lfloor \frac{r}{2} \rfloor} D_{r,s,l} \cdot G_{r+l+1,s+l+2}(\ep_1^{-1},\ldots,\ep_a^{-1}; \beta_1^{-1},\ldots,\beta^{-1}_b),
\end{equation*}
where $l=b-a$, and 
\[
D_{r,s,l}=\begin{cases}
d_{r,s} & \text{if } s>-l-2 \\
\sum_{t=-r-1}^{-l-2} d_{r,t}=\sum_{t=-\infty}^{-l-2} d_{r,t} & \text{if } s=-l-2.
\end{cases}
\]
\end{theorem}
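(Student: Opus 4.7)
The plan is to derive Theorem~\ref{thm:expansion2} directly from Theorem~\ref{thm:expansion} by combining the straightening law~(\ref{eqn:straightening2}) with the trivial vanishing identity $\sum_s d_{r,s}=0$ for $r>0$. I would fix $N > 2l+2$ and rewrite the stable formula
\[
\KTp_{\A_2}^{a\to b} = \sum_{r=0}^N \sum_{s=-r-1}^{-\lfloor r/2\rfloor} d_{r,s}\, G_{r+l+1,\, s+l+2}(\ep^{-1};\beta^{-1})
\]
by splitting the inner sum into the contributions with $s > -l-2$ and those with $s \leq -l-2$. In the latter range $s+l+2 \leq 0$, so (\ref{eqn:straightening2}) yields $G_{r+l+1,\,s+l+2} = G_{r+l+1,0} = G_{r+l+1}$, and these terms collapse into the single contribution $\bigl(\sum_{s\leq -l-2} d_{r,s}\bigr)\, G_{r+l+1}$. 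This is exactly the $s=-l-2$ entry of the proposed minimal sum with coefficient $D_{r,-l-2,l}$, while the terms with $s > -l-2$ supply the remaining entries with coefficient $D_{r,s,l}=d_{r,s}$.

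Next I would show that the outer summation truncates at $r = 2l+2$. From the explicit formula for $d_{r,s}$ displayed just before Theorem~\ref{thm:expansion} (using $\binom{k}{-j}=0$ for $j\geq 1$), the genuinely nonzero support is $s \in [-r-1,\, -\lceil r/2\rceil]$; the excerpt's stated upper endpoint $-\lfloor r/2\rfloor$ merely introduces a harmless zero at $s=-(r-1)/2$ when $r$ is odd. Hence for $r \geq 2l+3$ the inequality $\lceil r/2\rceil \geq l+2$ forces every nonzero $d_{r,s}$ to satisfy $s \leq -l-2$, and the combined row contribution reduces to $\bigl(\sum_s d_{r,s}\bigr)\, G_{r+l+1}$. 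Substituting $x_2=1$ in the generating function gives $f(x_1,1) = (1-2x_1+x_1^2)/(1-2x_1+x_1^2) = 1$, so $\sum_s d_{r,s} = 0$ for every $r > 0$; thus each row $r \geq 2l+3$ contributes zero and the outer sum truncates as claimed.

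A small bookkeeping check finally ensures that for $r \leq 2l+2$ and $s \in [-l-2,\, -\lfloor r/2\rfloor]$ the pair $(r+l+1,\, s+l+2)$ is weakly decreasing, hence a genuine partition---this reduces to $r \geq s+1$, which holds throughout the support of $D_{r,s,l}$. The main obstacle is the second step: pinpointing $r = 2l+2$ as the correct truncation index and verifying that the vanishing above it comes cleanly from the conjunction of the restricted support of $d_{r,s}$ (upper bound $-\lceil r/2\rceil$) with the row-sum identity $\sum_s d_{r,s} = 0$. Once those two facts are in hand, the remainder of the argument is purely formal.
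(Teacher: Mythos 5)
Your proof is correct and follows essentially the same route as the paper: truncate the stable expansion of Theorem \ref{thm:expansion} using the row-sum identity $\sum_s d_{r,s}=0$ for $r>0$ (obtained by setting $x_2=1$, i.e.\ $z_2=0$) together with the straightening law $G_{I,p}=G_{I,0}$ for $p\le 0$, and then collapse the $s\le -l-2$ tail of each remaining row into the coefficient $D_{r,-l-2,l}$. Your observation that the genuine support of $d_{r,\cdot}$ ends at $s=-\lceil r/2\rceil$ (so that for $r=2l+3$ every nonzero term still has $s+l+2\le 0$) handles the odd-$r$ boundary slightly more carefully than the paper's $-\lfloor r/2\rfloor$ estimate, but this is a refinement of the same argument rather than a different one.
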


\begin{proof}
It follows from Theorem \ref{thm:expansion}  that for large $N$
\begin{equation}\label{eqn:start}
\KTp_{\A_2}^{a\to b}=
\sum_{r=0}^N
\sum_{s=-r-1}^{-\lfloor \frac{r}{2} \rfloor}
d_{r,s} G_{r+l+1,s+l+2}.
\end{equation}
For notational simplicity we omit the arguments $\ep_i^{-1},\beta_i^{-1}$ of the \gr polynomials.
Consider the sum
\[
\sum_{s=-r-1}^{-\lfloor r/2 \rfloor} d_{r,s} G_{r+l+1,s+l+2}
\]
for a given $r$. In it, the occurring \gr polynomials have the same first index $r+l+1$, but varying second index $s+l+2$.
Notice that if $r>2l+2$ then all $s+l+2$ indexes 
are non-positive. Indeed, if $r>2l+2$ then $s\leq -\lfloor r/2 \rfloor < -\lfloor (2l+2)/2 \rfloor =-l-1$ and hence $s+l+2 < 1$. Then using the straightening law $G_{I,0}=G_{I,-1}=G_{I,-2}=\ldots$ (see (\ref{eqn:straightening2}) or Lemma \ref{lem:replace}) we have that
\begin{equation}\label{eqn:vanish}
\sum_{s=-r-1}^{-\lfloor r/2 \rfloor} d_{r,s} G_{r+l+1,s+l+2}=
\left(\sum_{s=-r-1}^{-\lfloor r/2 \rfloor} d_{r,s}\right) G_{r+l+1,0}.
\end{equation}
Plugging in $z_2=0$ into $1/(1-z_2/z_1^2)$ results 1, hence for $r>0$
we have $\sum_{s=-r-1}^{\lfloor r/2 \rfloor} d_{r,s}=0$, and in turn,
the expression (\ref{eqn:vanish}) is 0. This proves that in
(\ref{eqn:start}) the number $N$ can be chosen to be as small as
$2l+2$. The same statement may be obtained from a careful analysis of
the vanishing of the residues of (\ref{eqn:maradek}).

Now let $r\leq 2l+2$. Using the same straightening law of \gr polynomials we obtain
\[
 \sum_{s=-r-1}^{-\lfloor r/2 \rfloor} d_{r,s} G_{r+l+1,s+l+2}=
 \underbrace{ \left(  \sum_{s=-r-1}^{-l-2} d_{r,s} \right)}_{D_{r,s,l}} G_{r+l+1,0} +
 \sum_{s=-l-1}^{-\lfloor r/2 \rfloor} d_{r,s} G_{r+l+1,s+l+2},
\]
completing the proof.
\end{proof}

\begin{remark}
The expansion in Theorem \ref{thm:expansion2} is minimal in the sense that each occurring \gr polynomial is parametrized by a partition (with non-negative components), and hence can not be simplified by the straightening laws (\ref{eqn:straightening1})-(\ref{eqn:straightening2}) (or Lemma \ref{lem:replace}).
\end{remark}

\section{Alternating signs}\label{sec:pos}

The coefficients of the \gr polynomials in both the {\em stable} and the {\em minimal} \gr polynomial expansions of $\KTp_{\A_2}$ have alternating signs.

\begin{theorem}
The coefficient of $G_{a,b}(\ep_1^{-1},\ldots,\ep_{e}^{-1};\beta_1^{-1},\ldots,\beta_b^{-1})$ in both the expansion of Theorem \ref{thm:expansion} and the expansion of Theorem \ref{thm:expansion2} has sign $(-1)^{a+b}$.
\end{theorem}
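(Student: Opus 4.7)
The proof naturally splits into two cases corresponding to the two expansions; the stable case is almost immediate from the known formula for $d_{r,s}$, while the minimal case requires one short computation to control the coefficients that get collapsed by the straightening laws.

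For the stable expansion of Theorem~\ref{thm:expansion}, the coefficient of $G_{a,b}$ with $a=r+l+1$ and $b=s+l+2$ is $d_{r,s}$, and the closed form recorded right after (\ref{eqn:g-calc}) displays $d_{r,s}$ as $(-1)^{r+s+1}$ times a sum of non-negative terms built from binomial coefficients and powers of $2$. Hence the sign of $d_{r,s}$ is $(-1)^{r+s+1}$; since $a+b-(r+s+1)=2l+2$ is even, this equals $(-1)^{a+b}$, and the stable case is done.

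For the minimal expansion of Theorem~\ref{thm:expansion2}, the subcase $b>0$ is identical to the stable case since there $D_{r,s,l}=d_{r,s}$. The only remaining subcase is $b=0$, where the coefficient of $G_{r+l+1,0}$ is the partial row sum
\[
D_{r,-l-2,l}=\sum_{t=-r-1}^{-l-2}d_{r,t},
\]
and what needs to be shown is that this sum has sign $(-1)^{r+l+1}$. My plan is to evaluate $D_{r,-l-2,l}$ in closed form. Starting from
\[
\frac{1-2x_1+x_1^2}{x_2-2x_1+x_1^2}=1+(1-x_2)\sum_{k\ge 0}\frac{x_1^k(2-x_1)^k}{x_2^{k+1}},
\]
extracting the coefficient of $x_1^r$ for $r\geq 1$ yields
\[
\sum_t d_{r,t}\,x_2^t=\sum_{k=\lceil r/2\rceil}^{r}\binom{k}{r-k}(-1)^{r-k}2^{2k-r}\bigl(x_2^{-k-1}-x_2^{-k}\bigr).
\]
Summing the coefficients of $x_2^t$ over $t\leq -l-2$ then triggers a telescoping in $k$: for $k\geq l+2$ both the $+x_2^{-k-1}$ and $-x_2^{-k}$ contributions lie in the chosen range and cancel; for $k\leq l$ neither contributes; only the $+x_2^{-l-2}$ term from $k=l+1$ survives. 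This gives
\[
D_{r,-l-2,l}=(-1)^{r-l-1}\binom{l+1}{r-l-1}2^{2l+2-r}\quad\text{for }l+1\leq r\leq 2l+2,
\]
and $0$ otherwise, whose sign $(-1)^{r-l-1}$ equals $(-1)^{r+l+1}=(-1)^{(r+l+1)+0}=(-1)^{a+b}$.

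The only genuinely new ingredient is the telescoping identity of the third paragraph; everything else is a matter of matching parities. I expect this telescoping—once the generating function is written in the convenient form $1+(1-x_2)\cdot(\ldots)$—to be the key computational point but not a serious obstacle.
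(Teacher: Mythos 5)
Correct, and essentially the same argument as the paper: the stable half is identical, and for the minimal half both proofs rest on the identity $f-1=(1-x_2)/(x_2-2x_1+x_1^2)$ together with the expansion (\ref{eqn:g-calc}); your per-$k$ telescoping of $(1-x_2)\sum_{k}x_1^k(2-x_1)^k/x_2^{k+1}$ is just the multiplied-out form of the paper's step of dividing $f-1$ by $1-x_2$ and reading off that the partial row sums $\sum_{t\le s}d_{r,t}$ are the sign-alternating coefficients of $1/(x_2-2x_1+x_1^2)$. As a small bonus your computation yields the closed form $D_{r,-l-2,l}=(-1)^{r-l-1}\binom{l+1}{r-l-1}2^{2l+2-r}$ for $l+1\le r\le 2l+2$ (and $0$ otherwise), which is exactly the coefficient of $x_1^rx_2^{-l-2}$ in $1/(x_2-2x_1+x_1^2)$, so it is consistent with the paper's reasoning rather than a different route.
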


\begin{proof}
The statement for the expansion in Theorem \ref{thm:expansion} is equivalent to $d_{r,s}$ having sign $(-1)^{r+s+1}$, which follows from the explicit formula for $d_{r,s}$ in Section \ref{sec:expansions}.

The statement for the expansion in Theorem \ref{thm:expansion2} is equivalent to $D_{r,s,l}$ having sign $(-1)^{r+s+1}$ for any $l$. For this we need to additionally prove that
\begin{equation} \label{eqn:sign}
\text{the sign of }
\sum_{t=-\infty}^{-l-2} d_{r,t}
\text{ is }
(-1)^{r+s+1}
\end{equation}
for any $l$.

To prove (\ref{eqn:sign}) consider $f=(1-2x_1+x_1^2)/(x_2-2x_1+x_1^2)=\sum_{r,s}d_{r,s} x_1^rx_2^s$ (as before, $|x_1|<|x_2|$), and  let $g=(-1+f)/(1-x_2)$.
On the one hand $g=1/(x_2-2x_1+x_1^2)$ (from the explicit form of $f$). On the other hand
\[
g=\left(-1+\sum_{r,s} d_{r,s} x_1^rx_2^r\right)(1+x_2+x_2^2+\ldots)=\sum_{r,s}\left( \sum_{t=-\infty}^s d_{r,t} \right) x_1^rx_2^s.
\]
Here we used that $d_{0,-1}=1$ and $d_{0,s}=0$ for all $s\not=-1$.  

Comparing the two forms of $g$ we find that statement (\ref{eqn:sign}) is equivalent to the the property that the coefficient of $x_1^rx_2^s$ in the expansion of $1/(x_2-2x_1+x_1^2)$ has sign $(-1)^{r+s+1}$. This latter claim follows from the calculation (\ref{eqn:g-calc}).
\end{proof}

\section{Remarks on higher singularities}\label{sec:higher}

For singularities higher than $\A_2$, it is difficult to carry out our
program. There are no practical models for $\A_d$-singularities for
$d\ge7$, but even in the case of $\A_3$, where the model is very
simple (\cite{bsz,
  kaza:noass}), the combinatorial problems we face are rather complicated.
A proof analogous to that of Theorem \ref{thm:A2R} in this case yields
the following statement.

\begin{theorem}
We have
\[
\KTp_{\A_3}^{a\to b}=
\mathop{\Res}_{z_1=0,\infty}
\mathop{\Res}_{z_2=0,\infty}
\mathop{\Res}_{z_3=0,\infty}
\left(
\frac{ \left(1 -\frac{z_2}{z_1}\right)\left(1 -\frac{z_3}{z_1}\right)\left(1 -\frac{z_3}{z_2}\right) }
{ \left( 1-\frac{z_2}{z_1^2} \right) \left( 1-\frac{z_3}{z_1^2} \right)  \left( 1-\frac{z_3}{z_1z_2} \right)  }
\prod_{i=1}^3
\frac{ \prod_{j=1}^b \left( 1- \frac{z_i}{\beta_j}\right)}
{\prod_{j=1}^a \left( 1- \frac{z_i}{\alpha_j}\right)}
\frac{dz_3 dz_2 dz_1}{z_3 z_2 z_1} \right).
\] \qed
\end{theorem}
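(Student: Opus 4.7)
The plan is to mirror the proof of Theorem \ref{thm:A2R} step by step, using the model for the $\A_3$ singularity constructed in \cite{bsz, kaza:noass}. This model presents the closure of $\eta_{\A_3}^{a\to b}$ birationally as the image of a tower of three projective bundles
\[
M_3 \xrightarrow{\;p_3\;} M_2 \xrightarrow{\;p_2\;} M_1 = \Gr(1,\Ce) \xrightarrow{\;p_1\;} \pt,
\]
with tautological line subbundles $S, D, E$ of classes $\sigma_1, \sigma_2, \sigma_3$ at the three levels. The key feature of the model is that the normal bundle to the image inside $M_3 \times J^N(\Ce,\Cb)$ equals $\nu = \Hom(S \oplus D \oplus E, \Cb)$, so that by the K-theoretic version of the argument \eqref{thearg} we have
\[
\KTp^{a\to b}_{\A_3} \;=\; p_!\bigl(e(\nu)\bigr) \;=\; p_!\!\Big(\prod_{i=1}^{3}\prod_{j=1}^{b}\bigl(1 - \sigma_i/\beta_j\bigr)\Big),
\]
where $p = p_1 \circ p_2 \circ p_3 : M_3 \to \pt$.

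Next I would apply Lemma \ref{lem:intres} three times in succession: first to $p_3$ (residue variable $z_3$), then to $p_2$ (variable $z_2$), then to $p_1$ (variable $z_1$). At each stage Lemma \ref{lem:intres} produces denominator factors $\prod_j(1 - z_i/\omega_j)$ running over the Chern roots $\omega_j$ of the bundle being projectivized. Writing each such bundle as a direct sum of a few explicit line bundles built from $S$ and $D$ together with a tautological quotient, I would use the short exact sequence $S \to \Ce \to Q_1$ and its analogue at the second stage to cancel the "quotient" Chern roots against the $\prod_j(1 - z_i/\alpha_j)$ coming from $\Ce$, exactly as in the $\A_2$ proof where $\prod(1-z_2/\omega_j)$ was replaced by $\prod(1-z_2/\alpha_j)/(1-z_2/\sigma)$. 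What survives is the numerator $\prod_{i,j}(1-z_i/\beta_j)$ from $e(\nu)$, the diagonal factors $(1-z_2/z_1)(1-z_3/z_1)(1-z_3/z_2)$ produced by the three exact-sequence cancellations, the denominator $\prod_{i,j}(1-z_i/\alpha_j)$, and the three distinguished factors $(1-z_2/z_1^2)(1-z_3/z_1^2)(1-z_3/z_1z_2)^{-1}$ arising from the explicit line-bundle summands (of classes $\sigma_1^{\otimes 2}$, $\sigma_1^{\otimes 2}$, and $\sigma_1\otimes\sigma_2$) inside the bundles being projectivized.

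The main obstacle is cleanly identifying --- or extracting from \cite{bsz, kaza:noass} --- the precise summand structure of the two bundles whose projectivizations define $M_2$ and $M_3$, so that the Chern-root bookkeeping produces exactly the three denominator factors $z_1^2, z_1^2, z_1 z_2$ and no others. Once this is fixed, the three iterated pushforwards are direct applications of Lemma \ref{lem:intres} and the simplifications are routine. The order of the residues --- first $z_3$, then $z_2$, then $z_1$ --- matches the order of the pushforwards and is essential, because the cross-variable factor $(1-z_3/z_1 z_2)^{-1}$ is of the non-commuting type flagged in Remark \ref{rem:normalcross}.
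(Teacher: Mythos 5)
Your proposal is correct and is essentially the paper's own argument: the paper proves this theorem only by declaring it ``analogous to Theorem \ref{thm:A2R}'', i.e.\ take the $\A_3$ model of \cite{bsz,kaza:noass}, write $\KTp_{\A_3}=p_!(e(\nu))$ with $\nu=\Hom(S\oplus D\oplus E,\Cb)$, and push forward through the tower of projective bundles by iterating Lemma \ref{lem:intres} with exact-sequence cancellations, exactly as you outline (including the residue order $z_3,z_2,z_1$). The one detail you flag as an obstacle --- the precise line-bundle summands producing the denominators $z_1^2$, $z_1^2$, $z_1z_2$ --- is likewise left to the cited references by the paper, so your sketch matches the intended proof in both substance and level of detail.
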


This formula suggests that to obtain the \gr expansion of
$\KTp_{\A_3}$, we ought to consider the expansion
\[
\frac{1}
 { \left( 1-z_2/z_1^2 \right) \left( 1-z_3/z_1^2\right)  \left( 1-z_3/z_1z_2 \right)  }
 =
 \sum_{r,s,t} d_{r,s,t} (1-z_1)^r(1-z_2)^s(z-z_3)^t,
\]
valid in the region
$|1-z_1|<|1-z_2|<|1-z_3|$, and then find an appropriate way to resum
the series
\begin{equation}\label{eqn:expansion3}
\sum_{r,s,t} d_{r,s,t}
G_{r+l+1,s+l+2,t+l+3}(\ep_1^{-1},\ldots,\ep_a^{-1};
\beta_1^{-1},\ldots,\beta_b^{-1}).
\end{equation}
to obtain finite expressions. The concrete form of the resummation
procedure and the resulting finite expression is not clear at the moment.

It seems even more difficult to find the analogue of Theorem
\ref{thm:expansion2} (the minimal \gr expansion) for $\A_3$. To
achieve the \gr expansion of Theorem \ref{thm:expansion2} from that of
Theorem \ref{thm:expansion} we needed to work only with {\em one} of
the straightening laws, namely (\ref{eqn:straightening2}). However, to
``straighten'' the partitions in (\ref{eqn:expansion3}) one is forced
to use the other straightening law, namely (\ref{eqn:straightening1}),
and this  seems much more complex. It would be
interesting to develop the residue calculus or another analytic tool
which replaces the combinatorics of (\ref{eqn:straightening1}).

\end{document}